\newtheorem{theorem}{\bf Theorem}[section]
\newtheorem{lemma}[theorem]{\bf Lemma}
\newtheorem{remark}[theorem]{\bf Remark}
\newtheorem{proposition}[theorem]{\bf Proposition}
\newtheorem{corollary}[theorem]{\bf Corollary}
\newtheorem{example}[theorem]{\bf Example}
\newcommand{\be}{\begin{equation}}
\newfont{\bfc}{cmbsy10 scaled 1200}  
\newfont{\dr}{msbm10 scaled \magstep1}  
\newfont{\sdr}{msbm8}  
\newfont{\gl}{eufm10 scaled \magstep1}  
\DeclareFontFamily{OT1}{rsfs}{}
\DeclareFontShape{OT1}{rsfs}{n}{it}{<->rsfs10}{}
\DeclareMathAlphabet{\curly}{OT1}{rsfs}{n}{it}
 \newcommand{\pf}{{\em Proof}. }
 \newcommand{\PP}{{\mathbb P}}
 \newcommand{\cO}{{\mathcal O}}
\newcommand{\qu}{/\kern-.7ex/}
\newcommand{\exh}{\to\kern-1.8ex\to}
\newcommand{\lra}{\longrightarrow}
\newcommand{\Cl}{\operatorname{Cliff}(C)}
\newcommand{\Cln}{\operatorname{Cliff}_n(C)}
\newcommand{\Clt}{\operatorname{Cliff}_2(C)}
\newcommand{\Clo}{\operatorname{Cliff}_1(C)}
\newcommand{\Hom}{\operatorname{Hom}}
\newcommand{\Ext}{\operatorname{Ext}}
\begin{document}
\title[Stable bundles of rank 2]{Stable bundles of rank 2 with\\
4 sections}
\thanks{All authors are members of the research group VBAC
(Vector Bundles on Algebraic Curves). The third author would like to thank California State University Channel Islands, where parts of this research were carried out, and the Isaac Newton Institute, where the work was completed during the Moduli Spaces programme. We would also like to thank the EPSRC (Grant GR/R32093) for supporting the first author on a visit to Liverpool and the European Commission for supporting the second author as a Marie Curie Research Fellow in Liverpool for a period during which work on the original version of this paper was carried out}
\subjclass[2000]{14H60}
\date{\today}
\keywords{Algebraic curves, stable bundles, Brill-Noether loci,  coherent
systems, Clifford index}
\author{I.~Grzegorczyk}
 \address{Department of Mathematics\\University of California Channel Islands\\
 One University Drive\\Camarillo\\CA 93012\\USA}
 \email{Ivona.grze@csuci.edu}
\author{V.~Mercat}
  \address{5 rue Delouvain, 75019 Paris, France}
  \email{vipani.mm@math.gmail.com}
\author{P.~E.~Newstead}
  \address{Department of Mathematical Sciences \\
  University of Liverpool \\ Peach Street \\
  Liverpool L69 7ZL \\ UK}
  \email{newstead@liverpool.ac.uk}
\begin{abstract} This paper contains results on stable bundles of rank $2$ with space of sections of dimension $4$ on a smooth irreducible projective algebraic curve $C$. There is a known lower bound on the degree for the existence of such bundles; the main result of the paper is a geometric criterion for this bound to be attained. For a general curve $C$ of genus $10$, we show that the bound cannot be attained, but that there exist Petri curves of this genus for which the bound is sharp. We interpret the main results for various curves and in terms of Clifford indices and coherent systems. The results can also be expressed in terms of Koszul cohomology and the methods provide a useful tool for the study of the geometry of the moduli space of curves.

\end{abstract}

\maketitle
\section{Introduction}\label{section:intro}
This paper began life some years ago with a proof that, on a general curve of genus $10$, there does not exist a stable bundle of rank $2$ and degree $12$ with $4$ independent sections, but that such bundles do exist on certain Petri curves of genus $10$. The motivation for this came partly from a paper of C. Voisin \cite{Voi} concerning bundles of rank $2$ with canonical determinant and partly from results involving the Clifford index contained in \cite{M4}. In fact, the classical Clifford's theorem for line bundles has been extended to semistable bundles of higher rank by  G. Xiao \cite[p. 477]{X} (see also \cite[Theorem 2.1]{BGN}) as follows: if $E$
is a semistable vector bundle of rank $n$ and slope $\mu  = \frac{d}n$ on a smooth projective curve $C$ of genus $g$ such that $0 \leq \mu \leq 2g - 2$, then
$h^0(E) \leq \frac{d}{2} + n$. This result was improved by R.~Re \cite{Re} and further refined by the second author \cite{M4} to a version which takes into account the Clifford index $\Cl$ of $C$.  In \cite{M4}, it is conjectured that, for $ g\ge4$ and $E$ semistable, 
\begin{itemize}
\item  if $\Cl+2 \leq \mu\le  2g -4 - {\Cl}$, then $h^0 (E) \leq \frac{d-{\Cl}n}2+n $;
\item if $1 \leq \mu \leq \Cl + 2$, then $h^0(E) \leq \frac{1}{\Cl +1}(d-n) + n$.
\end{itemize}
Note that, for a general curve of genus $10$, the Clifford index is $4$, so, when $n=2$, the main part of the conjecture becomes
$$h^0 (E) \leq \frac{d}2-2 \mbox{ for }6 \leq \mu\le  12.$$
In particular bundles of degree $12$ with $h^0=4$ represent a ``corner point'' for these inequalities. 

The result mentioned at the beginning of the previous paragraph was never published and, as far as we are aware, has not been published since except in a brief form in the survey article \cite{n}. However, there have been several developments that add to the interest of the result and also place it in a more general context. The most important of these has been the increasing understanding that higher rank Brill-Noether theory can provide insights into the geometry of the moduli space of curves through the medium of Koszul cohomology (see, for example, \cite{an, an2}); in this respect the present paper adds to the evidence that higher rank Brill-Noether theory is not a consequence of the classical theory for rank $1$. Related to this are the broadening of higher rank Brill-Noether theory into the study of coherent systems on curves (in fact the form of the result in \cite{n} is in this context), the introduction of higher rank Clifford indices (see below for details) leading to a reformulation of the conjecture of \cite{M4} and the fact that, very recently, a counter-example to the conjecture for $n=3$ has been noted \cite{mln}. For $n=2$, we do not construct any counter-examples here, but our results do suggest where one might look for them; for recent developments including the construction of such counter-examples, see section \ref{ps}.

A final reason for interest in the construction of bundles of rank $2$ with many sections is that new estimates for the dimensions of Brill-Noether loci with fixed determinant have been obtained, also very recently, by B. Osserman \cite{Oss}.

In order to state our results, we need some definitions. Let $C$ be a smooth irreducible projective algebraic curve of genus $g\ge2$ defined over the complex numbers, and let $M(n,d)$ (resp. $\widetilde{M}(n,d)$) denote the moduli space of stable vector bundles (resp. S-equivalence classes of semistable vector bundles) of rank $n$ and degree $d$ over $C$. The {\em Brill-Noether locus}
$B(n,d,k)$ is the subvariety of $M(n,d)$ defined by
$$B(n,d,k)=\{E\in M(n,d)\;|\;h^0(E)\ge k\}.$$
Corresponding subvarieties $\widetilde{B}(n,d,k)$ of  $\widetilde{M}(n,d)$ can be defined similarly; note that $B(n,d,k)=\widetilde{B}(n,d,k)$ when $\gcd(n,d)=1$. The most fundamental question concerning these loci is to determine when they are non-empty (see \cite{GT} for a survey of results on this and other questions in the Brill-Noether theory of vector bundles). In particular, the {\it expected dimension} of $B(n,d,k)$ is given by the {\it Brill-Noether number}
$$\beta(n,d,k):=n^2(g-1)+1-k(k-d+n(g-1)).$$
Every component of $B(n,d,k)$ has dimension $\ge\beta(n,d,k)$, but, even on a general curve, it is not true that $\beta(n,d,k)\ge0$ implies the non-emptiness of $B(n,d,k)$, nor is the converse of this result true.The purpose of this paper is to contribute to the study of the question of non-emptiness in the first case for which the answer is not completely known, namely $n=2$, $k=4$. 

For general curves, M. Teixidor i Bigas has obtained non-emptiness results using degeneration methods \cite{T1} and L. Brambila-Paz has obtained both emptiness and non-emptiness results by other methods \cite{BP}. However we shall work in a wider context. For this, we need some definitions from \cite{ln}. For any vector bundle $E$ of rank $n$ and degree $d$ on $C$, we define
$$
\gamma(E) := \frac{1}{n} \left(d - 2(h^0(E) -n)\right) = \mu(E) -2\frac{h^0(E)}{n} + 2.
$$
If $C$ has genus $g \geq 4$, we then define, for any positive integer $n$,
$$
\Cln:= \min_{E} \left\{ \gamma(E) \;\left| 
\begin{array}{c} E \;\mbox{semistable of rank}\; n \\
h^0(E) \geq 2n,\; \mu(E) \leq g-1
\end{array} \right. \right\}
$$
(this invariant is denoted in \cite{ln} by $\gamma_n'$). Note that $\Clo = \Cl$ is the usual Clifford index of the curve $C$. Moreover, as observed in \cite[Proposition 3.3 and Conjecture 9.3]{ln}, the conjecture of \cite{M4} can be restated in a slightly weaker form as

\noindent{\bf Conjecture.} $\Cln=\Cl$. 

Next, the {\em gonality sequence} $\{d_r\}$ of $C$ is defined by
$$
d_r := \min \{ \deg L \;|\; L \; \mbox{a line bundle on} \; C \; \mbox{with} \; h^0(L) \geq r +1\}.
$$ 
We say that $d_r$ {\em computes} $\Cl$ if $d_r\le g-1$ and $\Cl=d_r-2r$. By classical Brill-Noether theory, we have always
\begin{equation}\label{eqn14}
d_r\le g+r-\left[\frac{g}{r+1}\right].
\end{equation}
This formula is valid for all $g$.

We need also the concept of a {\em Petri} curve. In general, for any line bundle $L$ on $C$, we consider the {\it Petri map}
$$H^0(L)\otimes H^0(K\otimes L^*)\lra H^0(K)$$
given by multiplication of sections. A curve $C$ is said to be a {\it Petri curve} if the Petri map is injective for every line bundle $L$ on $C $. We shall need the following facts concerning Petri curves:
\begin{itemize}
\item the general curve of any genus is Petri;
\item there exist Petri curves lying on K3 surfaces (see \cite{Laz});
\item on a Petri curve, the inequalities of \eqref{eqn14} are all equalities. 
\end{itemize}

We now summarise the results of the paper. In section \ref{rank2}, we obtain results on semistable and stable bundles of rank $2$ with $h^0\ge4$, especially those of degree $\le2d_1$ (note that such a bundle always has degree $\ge\min\{2d_1,d_4\}$ (see Proposition \ref{prop2} below and compare \cite[Theorem 5.2]{ln})). In particular we determine the emptiness or non-emptiness of $B(2,d,4)$ and $\widetilde{B}(2,d,4)$ in many cases (Proposition \ref{prop1} and Remark \ref{r8}). We also discuss these results for various different curves (Examples \ref{ex1}--\ref{ex3}). 

In section \ref{d4}, we suppose $d_4\le2d_1$ and concentrate on stable bundles of this type which have the minimum possible degree $d_4$. This leads to our first main theorem:
\newpage
\noindent{\bf Theorem \ref{main}.} {\em 
Suppose $d_4\le2d_1$. Then
\begin{itemize}
\item[(i)] if $B(2,d_4,4)\ne\emptyset$, there exists a line bundle $M\in B(1,d_4,5)$ such that $\phi_M(C)$ is contained in a quadric;
\item[(ii)] if $d_4<2d_1$ and there exists $M\in B(1,d_4,5)$ such that $\phi_M(C)$ is contained in a quadric, then $B(2,d_4,4)\ne\emptyset$. 
\end{itemize}}
\noindent (Here, for any generated line bundle $M$, $\phi_M$ denotes the morphism $C\to{\mathbb P}(H^0(M)^*)$ defined by evaluation of sections of $M$.) We relate this result to possible counter-examples to the conjecture $\Clt=\Cl$ (Remark \ref{r3}) and, for Petri curves, to the possible existence of non-empty Brill-Noether loci for which$$\beta(2,d,4):=4g-3-4(4-d+2g-2)$$ is negative (Remark \ref{r4}). We obtain also an upper bound for $\dim B(2,d_4,4)$ for a general curve of genus $g\ge10$ (Proposition \ref{newprop}). 

In section \ref{g=10}, we prove the main result of the original version of this paper, namely

\noindent{\bf Theorem \ref{main2}.} (i) {\em Let $C$ be a general curve of genus $10$. Then $$B(2,d,4)\ne\emptyset\Longleftrightarrow d\ge13\Longleftrightarrow \beta(2,d,4)\ge0.$$}
\hspace{\parindent}(ii) {\em There exist Petri curves of genus $10$ for which $B(2,12,4)\ne\emptyset$. Moreover $\beta(2,12,4)<0$.}

In section \ref{og}, we discuss curves of odd genus and obtain an interesting example for $g=5$ (Proposition \ref{prop7}). In section \ref{cs}, we show how our results can be interpreted in terms of coherent systems. Finally, in section \ref{ps}, we comment on the exciting developments that have taken place since this paper was completed.

Throughout the paper, $C$ is a smooth irreducible projective curve of genus $g\ge2$ defined over an algebraically closed field of characteristic $0$. The canonical bundle on $C$ is denoted by $K$. For any vector bundle $E$ on $C$, we write $d_E$ for the degree of $E$.

We thank the referee for a careful reading and some useful comments.
 
\section{Bundles of rank 2 with 4 sections}\label{rank2}
\begin{proposition}\label{prop1}
Let $C$ be a curve of genus $g\ge2$. Then
\begin{itemize}
\item[(i)] $\widetilde{B}(2,2d_1,4)\ne\emptyset$;
\item[(ii)] $B(2,d,4)\ne\emptyset$ if $d\ge2d_1+3$; moreover, if there exist two non-isomorphic line bundles $L_1$, $L_2$ of degree $d_1$ with $h^0(L_i)=2$, then $B(2,d,4)\ne\emptyset$ for $d\ge 2d_1+1$;
\item[(iii)] if $d<2d_1$, then $B(2,d,4)=\widetilde{B}(2,d,4)$;
\item[(iv)] if $e\le2d_1$ and $B(2,e,4)\ne\emptyset$, then $B(2,d,4)\ne\emptyset$ for $d\ge e$.
\end{itemize}
\end{proposition}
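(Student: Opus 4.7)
My plan is to handle the four parts by explicit constructions, all built from direct sums of line bundles realizing the gonality $d_1$, together with positive elementary modifications. Throughout, let $L$ (or $L_1, L_2$) denote a line bundle of degree $d_1$ with $h^0 \geq 2$, whose existence is guaranteed by the definition of $d_1$.

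The easy parts are (i) and (iii). Part (i) follows by taking $L \oplus L$, which is semistable of rank $2$ and degree $2d_1$ with $h^0 = 4$. For (iii), any $E \in \widetilde{B}(2, d, 4)$ with $d < 2d_1$ that fails to be stable is $S$-equivalent to $L_1' \oplus L_2'$ with $\deg L_i' = d/2 < d_1$; minimality of $d_1$ forces $h^0(L_i') \leq 1$, giving $h^0 \leq 2$, contradicting $h^0 \geq 4$. For (iv), I use positive elementary modifications: given $E \in B(2, e, 4)$ and a generic pair $(p, \ell)$ with $\ell \subset E|_p$ a line, I form $E' \supset E$ with $E'/E \cong \kappa(p)$, so $\deg E' = e+1$ and $h^0(E') \geq h^0(E) \geq 4$. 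Any line subbundle $M \subset E'$ either lies in $E$ (bounded by stability of $E$) or satisfies $\deg M \leq \deg \widetilde{M}_0 + 1$, where $\widetilde{M}_0$ is the saturation of $M \cap E$ in $E$. This bound is compatible with stability of $E'$ automatically when $e$ is even; for $e$ odd, one chooses $\ell$ to avoid the directions $\widetilde{M}_0|_p$ coming from maximal subbundles of $E$, which form a proper closed subset of $\mathbb{P}(E|_p)$ for generic $p$. Iterating produces elements of $B(2, d, 4)$ for every $d \geq e$.

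For (ii), I construct a stable base case in each branch and then invoke (iv). In the non-isomorphic branch, take $E_0 = L_1 \oplus L_2$ with $L_1 \not\cong L_2$; because $\mathrm{Hom}(L_i, L_j) = 0$ for $i \neq j$, the maximal line subbundles of $E_0$ are exactly $L_1$ and $L_2$, giving two distinct points in $\mathbb{P}(E_0|_p) \cong \mathbb{P}^1$, and a single elementary modification at any $p$ with $\ell$ different from both yields an element of $B(2, 2d_1+1, 4)$. In the general branch, take $E_0 = L \oplus L$ and perform three elementary modifications at distinct generic points $p_1, p_2, p_3$ with pairwise distinct lines $\ell_i \in \mathbb{P}(E_0|_{p_i})$. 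For any line subbundle $M \subset E_3$, writing $M_0 = M \cap E_0$, one has $\deg M = \deg M_0 + |\mathrm{supp}(M/M_0)|$ with support in $\{p_1, p_2, p_3\}$. The saturated rank-$1$ subsheaves of $L \oplus L$ split into the $\mathbb{P}^1$-family of maximal $L$-subbundles (degree $d_1$) and saturated subsheaves of degree $\leq 0$; each condition $\widetilde{M}_0|_{p_i} = \ell_i$ picks out a single member of this $\mathbb{P}^1$ when $\widetilde{M}_0$ is maximal, so pairwise distinctness of the $\ell_i$ forces $|\mathrm{supp}(M/M_0)| \leq 1$ in the maximal case, while in the non-maximal case $\deg M \leq 0 + 3 = 3 \leq d_1 + 1$ since $d_1 \geq 2$. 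Either way, $\deg M \leq d_1 + 1 < (2d_1+3)/2 = \mu(E_3)$, so $E_3 \in B(2, 2d_1+3, 4)$, and (iv) extends to all $d \geq 2d_1+3$.

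The main obstacle is the stability analysis of $E_3$: one must classify every rank-$1$ subsheaf of $E_3$, including non-saturated cases such as $L(-D) \hookrightarrow L \oplus L$, track how each extends across the three added torsion points, and verify the fiber-condition count. The crucial geometric input is that the maximal subbundle family of $L \oplus L$ is a single $\mathbb{P}^1$, so demanding two or more independent fiber directions $\widetilde{M}_0|_{p_i} = \ell_i$ is generically impossible; this dimension count is precisely what yields the threshold $2d_1+3$ in the general case and $2d_1+1$ in the non-isomorphic case.
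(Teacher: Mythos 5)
Parts (i) and (iii) are fine and coincide with the paper's arguments (in (iii) you should argue directly from a destabilizing sub-line-bundle $L_1'\subset E$ of degree $d/2$, since $h^0$ is not an S-equivalence invariant; but the inequality you need, $h^0(E)\le h^0(L_1')+h^0(L_2')\le 2$, is exactly what that sequence gives). Your base constructions for (ii) are also correct and pleasantly explicit: the observation that a saturated rank-one subsheaf of $L\oplus L$ of degree strictly between $0$ and $d_1$ would produce a base-point-free pencil of degree $<d_1$ is valid, and the one-point modification of $L_1\oplus L_2$ and the three-point modification of $L\oplus L$ with pairwise distinct directions do give stable bundles in $B(2,2d_1+1,4)$ and $B(2,2d_1+3,4)$; the paper instead allows torsion of arbitrary length and quotes Mercat's Th\'eor\`eme A.5 \cite{M3}, and handles the single-pencil case via $L(p),L(q)$.

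The genuine gap is in the degree-raising step, i.e.\ in your (iv) and in the phrases ``iterating produces elements of $B(2,d,4)$ for every $d\ge e$'' and ``(iv) extends to all $d\ge 2d_1+3$''. First, (iv) is stated only for $e\le 2d_1$, yet you invoke it at $e=2d_1+1$ and $e=2d_1+3$; what you actually use is an unstated general claim that a generic positive elementary modification of any stable bundle of odd degree is again stable. Second, that claim rests entirely on your assertion that the directions $\widetilde{M}_0|_p$ cut out by the maximal line subbundles (degree $(e-1)/2$, Segre invariant $1$) form a proper closed subset of ${\mathbb P}(E|_p)$: the family of such subbundles is projective and maps to ${\mathbb P}(E|_p)$ by evaluation, so its image is either finite or all of ${\mathbb P}(E|_p)$, and you must rule out a positive-dimensional family sweeping out every direction. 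This is true, but it is a real argument (fix one maximal subbundle $M$ with quotient $M'$; any other is of the form $M'(-x)$, its lift to $E$ is unique when $E$ is stable, and the admissible $x$ form a fibre of the non-constant map $C\to{\mathbb P}(\Ext^1(M',M))$, so there are only finitely many maximal subbundles, cf.\ \cite{LN}), and your write-up simply asserts it. Note that within the stated range $e\le2d_1$ none of this is needed: since $h^0(E)\ge4$ and every line subbundle $L$ of $E$ has $d_L<e/2\le d_1$, hence $h^0(L)\le1$, every quotient line bundle has $h^0\ge3$ and thus degree $\ge d_2$, so every line subbundle of $E$ has degree $\le e-d_2<(e-1)/2$; consequently \emph{every} elementary modification is automatically stable, and all $d\ge e$ follow by twisting the bundles of degrees $e$ and $e+1$ by effective line bundles. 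Beyond $2d_1$ you must either prove the finiteness statement above or produce base cases of both parities and twist by effective divisors; as written, the passage from your base cases to all larger degrees is not justified.
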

\begin{proof} (i) By definition, there exist line bundles on $C$ of degree $d_1$ with $h^0=2$. Let $L_1$, $L_2$ be two such line bundles. Then $L_1\oplus L_2$ is semistable of degree $2d_1$ with $h^0=4$.

(ii) Consider extensions
\begin{equation}\label{eqnelem}
0\rightarrow L_1\oplus L_2\rightarrow E\rightarrow\tau\rightarrow0,
\end{equation}
where $L_1$, $L_2$ are non-isomorphic line bundles of the same degree with $h^0(L_i)=2$ and $\tau$ is a torsion sheaf of length $t>0$. Such extensions are classified by pairs $(e_1,e_2)$, where $e_i\in \Ext^1(\tau,L_i)$. When $t=1$, it is easy to see that $E$ is a stable vector bundle provided $e_1$ and $e_2$ are both non-zero. For any $t>0$, it is in fact true that $E$ is a stable vector bundle for the general extension (\ref{eqnelem}) \cite[Th\'eor\`eme A.5]{M3}. This completes the proof when $L_1$, $L_2$ have degree $d_1$. When there exists only one line bundle $L$ of degree $d_1$ with $h^0(L)=2$, we take $L_1=L(p)$, $L_2=L(q)$ in \eqref{eqnelem}, where $p$, $q$ are distinct points of $C$.

(iii) Suppose that $d<2d_1$ and that $E$ is semistable of degree $d$. If $E$ is strictly semistable, then $d$ is even and there exists a line subbundle $L$ of $E$ of degree $\frac{d}2$. Now $\frac{d}2<d_1$, so $h^0(L)<2$ and $h^0(E/L)<2$. This contradicts the fact that $h^0(E)\ge4$, so $E$ cannot be strictly semistable.

(iv) It is sufficient to prove that $B(2,e+1,4)\ne\emptyset$, since we can then obtain the result by tensoring with effective line bundles. For this, let $E\in B(2,e,4)$ and consider extensions
\begin{equation}\label{eqntau}
0\rightarrow E\rightarrow F\rightarrow\tau\rightarrow0,
\end{equation}
where $F$ is a vector bundle and $\tau$ has length $1$. Every line subbundle $L$ of $E$ has $d_L<\frac{e}2\le d_1$, so $h^0(L)<2$. Hence every quotient line bundle $L'$ of $E$ has $h^0(L')\ge3$. It follows that $d_{L'}\ge d_2$, so $d_L\le e-d_2$. Thus every line subbundle of $F$ has degree $\le e-d_2+1<\frac{e+1}2=\mu(F)$ since $e\le2d_1\le2d_2-2$. So $F$ is stable.\end{proof}

\begin{remark}\begin{em}\label{rmk1}
For a Petri curve of genus $g\ge3$, there exist non-isomorphic  $L_1$, $L_2$ of degree $d_1$ with $h^0(L_i)=2$ and Proposition \ref{prop1}(ii) states that $B(2,d,4)$ is non-empty for $d\ge g+3$ when $g$ is even and for $d\ge g+4$ when $g$ is odd. This was previously proved by Teixidor for a general curve using degeneration methods \cite{T1}. This implies the result for odd $d$ for any curve on which $d_1$ takes its generic value $g+1-\left[\frac{g}2\right]$. On the other hand, for a hyperelliptic curve (and in particular any curve of genus $2$), there is only one line bundle $L$ of degree $d_1=2$ with $h^0=2$. The situation for $g=2$ is described in the following proposition.
\end{em}\end{remark}
\begin{proposition}\label{prop4}
Let $C$ be a curve of genus $2$. Then
\begin{itemize}
\item $\widetilde{B}(2,d,4)\ne\emptyset$ if and only if $d=4$ or $d\ge6$;
\item $B(2,d,4)\ne\emptyset$ if and only if $d\ge6$.
\end{itemize}
\end{proposition}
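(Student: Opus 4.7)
The plan is to dispatch each degree separately, leaning on Proposition \ref{prop1} at the endpoints and handling the small cases by hand. Throughout I use the genus-$2$ facts $d_1 = 2$, realized uniquely by the canonical bundle $K$, and $h^0(L') = 2$ for every line bundle $L'$ of degree $3$ (by Riemann--Roch, since $\deg(K \otimes L'^{-1}) < 0$).

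For $d \le 3$, the Xiao--Clifford bound $h^0(E) \le d/2 + 2$ for semistable rank-$2$ bundles (valid since $0 \le \mu \le 2g - 2 = 2$) gives $h^0(E) \le 7/2 < 4$, so $\widetilde{B}(2,d,4) = \emptyset$. For $d = 4$ one inclusion is Proposition \ref{prop1}(i): $K \oplus K \in \widetilde{B}(2,4,4)$. To see $B(2,4,4) = \emptyset$, let $E$ be stable of rank $2$ and degree $4$ and let $m$ be the maximal degree of a line subbundle; Nagata's bound forces $m \ge (d-g)/2 = 1$ and stability forces $m < 2$, so $m = 1$. Any such subbundle $L$ has $h^0(L) \le 1$ (since $1 < d_1$), and the quotient $L' = E/L$ has degree $3$ with $h^0(L') = 2$, forcing $h^0(E) \le 3$, a contradiction.

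The crucial case is $d = 5$, where semistable equals stable because the degree is odd. The same Nagata-plus-stability squeeze now pins $m = 2$. For a maximal subbundle $L$ with $L \ne K$, Clifford gives $h^0(L) \le 1$ and the bookkeeping above yields $h^0(E) \le 3$. Otherwise $L = K$ and $E$ fits into an extension
\[
0 \to K \to E \to L' \to 0, \qquad \deg L' = 3.
\]
For $h^0(E) = 4$, the connecting map $\delta : H^0(L') \to H^1(K)$ must vanish. Via Serre duality
\[
\Ext^1(L', K) \cong H^1(K \otimes L'^{-1}) \cong H^0(L')^*,
\]
the cup-product pairing $H^0(L') \otimes \Ext^1(L', K) \to H^1(K) \cong \CC$ is the natural (perfect) evaluation; so $\delta = 0$ forces the extension class to vanish and $E \cong K \oplus L'$. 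But then $L'$ is a subbundle of slope $3 > 5/2 = \mu(E)$, contradicting semistability. Hence $\widetilde{B}(2,5,4) = \emptyset$.

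For $d = 6$ I would construct $E$ directly: pick any stable rank-$2$ bundle $F$ of degree $2$ (available since $M(2,2)$ is non-empty) and set $E := K \otimes F$. Tensoring by a line bundle preserves stability, $\chi(E) = 4$, and Serre duality combined with stability of $F^*$ (of negative slope) gives $h^1(E) = h^0(F^*) = 0$, so $h^0(E) = 4$. For $d \ge 7 = 2d_1 + 3$, Proposition \ref{prop1}(ii) directly gives $B(2,d,4) \ne \emptyset$, and this feeds into $\widetilde{B}(2,d,4) \ne \emptyset$ for all $d \ge 6$. Combining the four ranges yields both bullet points. The step I expect to be the main obstacle is $d = 5$: identifying the connecting homomorphism with the evaluation pairing against the extension class is what converts the arithmetic $h^0(E) = 4$ into a splitting of the extension, and hence into a failure of semistability.
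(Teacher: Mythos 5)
Your proposal is correct, and while it follows the same inevitable degree-by-degree skeleton as the paper (Clifford for $d\le 3$, $K\oplus K$ for $d=4$, Proposition \ref{prop1}(ii) for $d\ge 7$), it handles the three substantive cases differently. For $B(2,4,4)=\emptyset$ the paper simply cites Re's result that $K\oplus K$ is the unique semistable bundle of degree $4$ with $h^0=4$; your Nagata-plus-stability argument (maximal subbundle of degree exactly $1$, quotient of degree $3$) is a nice self-contained replacement. For $d=5$ the paper's argument is a one-liner which you may want to adopt: for semistable $E$ of degree $5$, the bundle $K\otimes E^*$ is semistable of negative slope, so $h^1(E)=h^0(K\otimes E^*)=0$ and $h^0(E)=\chi(E)=3$ by Riemann--Roch; your route via the maximal subbundle, the identification of the connecting map with cup product against the extension class in $\Ext^1(L',K)\cong H^0(L')^*$, and the resulting splitting contradicting semistability is correct but considerably heavier than necessary. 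For $d=6$ the paper builds a stable $E$ with $h^0(E)=4$ as a general extension $0\to L\to E\to M\to 0$ with $\deg L=2$, $L\not\cong K$, $\deg M=4$ (stability by Lange--Narasimhan), whereas you take $E=K\otimes F$ with $F\in M(2,2)$ stable, getting $h^1(E)=h^0(F^*)=0$ and $h^0(E)=\chi(E)=4$; this works and trades the general-extension lemma for the (standard) non-emptiness of $M(2,2)$ on a genus-$2$ curve. Both routes are valid; yours is more elementary in the $d=4$ step and less economical in the $d=5$ step.
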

\pf Suppose first that $E$ is a semistable bundle of rank $2$ and degree $d$. If $d\le3$, then the Clifford theorem of \cite{X} gives $h^0(E)\le3$. For $d=5$, we have $h^0(E)=3$ by the Riemann-Roch theorem. On the other hand, if $d=4$ or $d=6$, there certainly exist semistable bundles $E$ with $h^0(E)=4$, namely $K\oplus K$ and $L_1\oplus L_2$, where $L_1$, $L_2$ are any line bundles of degree $3$. By Proposition \ref{prop1}(ii), $B(2,d,4)\ne\emptyset$ for $d\ge7$. Since $K\oplus K$ is the only semistable bundle of degree $4$ with $h^0=4$ (see \cite{Re}), 
it remains only  to show that there exists a stable bundle $E$ of degree $6$ with $h^0(E)\ge4$. For this we consider extensions
\begin{equation}\label{eqng2}
0\rightarrow L\rightarrow E\rightarrow M\rightarrow0,
\end{equation}
where $\deg L=2$ and $\deg M=4$. Suppose that $L\not\cong K$; then $h^0(L)=1$ and $h^1(L)=0$. On the other hand, $h^0(M)=3$, so it follows from the cohomology sequence of (\ref{eqng2}) that $h^0(E)=4$. Moreover, it is a standard fact (see, for example, \cite{LN}) that, for the general extension (\ref{eqng2}), $E$ is stable. \qed

We now return to the case of an arbitrary curve $C$ of genus $g\ge2$.
\begin{proposition}\label{prop2}
Suppose that $E\in B(2,d,4)$ with $d\le2 d_1$. Let $E'$ be the image of the evaluation morphism $H^0(E)\otimes {\cO} \rightarrow E$. Then
\begin{itemize}
\item[(i)] $h^0(L)\le1$ for every line subbundle $L$ of $E$;
\item[(ii)] $h^0(\det E')\ge5$;
\item[(iii)] $d\ge d_4$.
\end{itemize}
\end{proposition}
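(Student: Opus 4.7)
The plan is to handle (i), then record a rank-$2$ observation about $E'$, then establish (ii), and finally deduce (iii). For (i), I would use the stability of $E$: any line subbundle $L\subset E$ satisfies $d_L<d/2\le d_1$, whereas $h^0(L)\ge 2$ would force $d_L\ge d_1$ by the definition of $d_1$. Hence $h^0(L)\le 1$ automatically.

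Before tackling (ii), I would note that $H^0(E')=H^0(E)$: the inclusion $E'\hookrightarrow E$ gives one direction, and by construction every section of $E$ factors through $E'$. Thus $h^0(E')=4$. If $E'$ had rank $1$, its saturation in $E$ would be a line subbundle with at least $4$ sections, contradicting (i); so $E'$ is a rank-$2$ subsheaf of $E$, globally generated by $V:=H^0(E')$, and $\det E'$ is a line bundle of degree $\le d$.

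The core of the argument is (ii). I would consider the wedge-product map
\[
\mu:\wedge^2 V\longrightarrow H^0(\det E'),\qquad s\wedge s'\longmapsto s\wedge s',
\]
and claim that $\ker\mu$ contains no nonzero decomposable element. Indeed, a relation $s\wedge s'=0$ in $H^0(\det E')$ would force $s$ and $s'$ to be pointwise linearly dependent in $E'$, hence to generate a rank-$1$ subsheaf of $E'$ whose saturation in $E$ is a line subbundle with at least two independent sections---forbidden by (i). Now $\dim\wedge^2 V=6$ and the set of decomposable elements in $\PP(\wedge^2 V)=\PP^5$ is the Klein quadric $G(2,V)$, an irreducible quadric hypersurface of dimension $4$. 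Any projective linear subspace of $\PP^5$ of projective dimension $\ge 1$ meets this hypersurface, so $\dim\ker\mu\le 1$, which yields $h^0(\det E')\ge 6-1=5$.

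Finally, (iii) follows at once: $\det E'$ is a line bundle on $C$ with $h^0\ge 5$, so $\deg\det E'\ge d_4$ by definition of the gonality sequence, while $E'\subseteq E$ gives $\deg\det E'\le d$; combining yields $d\ge d_4$. The main subtle step will be (ii), where one must correctly identify the Pl\"ucker/Klein-quadric geometry and verify that the \emph{only} possible obstruction to $h^0(\det E')\ge 5$---a $2$-dimensional subspace of sections generating a rank-$1$ subsheaf---has already been excluded by (i).
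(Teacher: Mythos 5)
Your proof is correct, but your argument for (ii) takes a genuinely different route from the paper's. Parts (i) and (iii) are handled exactly as in the paper (stability gives $d_L<\frac d2\le d_1$, and $\det E'\subseteq\det E$ together with $h^0(\det E')\ge5$ gives $d\ge d_4$). For (ii), the paper applies $\bigwedge^2$ to the evaluation sequence $0\to D_V(E')^*\to V\otimes\cO\to E'\to0$ for a $4$-dimensional generating subspace $V$, shows that $E'$ is stable and $D_V(E')$ semistable of the same slope, deduces $h^0(D_V(E')^*\otimes E')\le1$, hence $h^0(F)\le1$ for the kernel $F$ of $\bigwedge^2V\otimes\cO\to\det E'$, and so $h^0(\det E')\ge5$. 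You instead argue purely on sections: by (i) the kernel of $\bigwedge^2V\to H^0(\det E')$ contains no nonzero decomposable vector, and since the decomposables form the Pl\"ucker quadric hypersurface in $\PP(\bigwedge^2V)=\PP^5$, any pencil inside the kernel would meet it (we are over an algebraically closed field), forcing the kernel to have dimension at most $1$. This is essentially the Paranjape--Ramanan argument; indeed the paper itself remarks that (ii) follows from (i) and \cite[Lemma 3.9]{PR} and includes the dual-span proof only ``for future reference'': the stability of $E'$, the semistability of $D_V(E')$, the sequences \eqref{eqn:2} and \eqref{eqn:3} and the bound $h^0(F)\le1$ are exactly what is reused in Remark \ref{r1} and Lemma \ref{lem1} (e.g.\ to obtain $D_{H^0(E)}(E)\cong E$ and the surjectivity of $\bigwedge^2H^0(E)\to H^0(\det E)$ when $d=d_4$). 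So your route is shorter and more elementary --- it needs neither the generation of $E'$ by $V$ nor any semistability of the dual span --- but it does not yield the auxiliary facts the paper exploits later. One small correction: membership in $B(2,d,4)$ only gives $h^0(E')=h^0(E)\ge4$, not $=4$; this is harmless, since your quadric argument only requires a $4$-dimensional subspace $V\subseteq H^0(E')$ (and that $E'$ has rank $2$, which you rightly deduce from (i)).
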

\pf (i)
Since $E$ is stable of degree $d\le 2d_1$, every line subbundle $L$ of $E$ has degree $d_L< d_1$ and so $h^0(L)\le1$.

(ii) [This follows directly from (i) and \cite[Lemma 3.9]{PR}; for the convenience of the reader and for future reference, we include a proof.] It follows from (i) that $E'$ is of rank $2$ and $h^0(E'^*)=0$
(otherwise $\cO$ would be a direct summand of $E'$ and the other factor would be a line bundle $L$ with $h^0(L)\ge3$). If $E'=E$, then $E'$ is stable by hypothesis. If $E'\not=E$, any proper quotient of $E'$ is a non-trivial generated line bundle and therefore has degree $\ge d_1>\mu(E')$. So again $E'$ is stable.

Now consider the exact sequence
\begin{equation}\label{eqn:1}
0\rightarrow D_V(E' )^* \rightarrow V\otimes  \cO \rightarrow E'\rightarrow0,
\end{equation}
where $V$ is a $4$-dimensional subspace of $H^0(E')$ which generates $E'$.
Applying $\bigwedge^2$, we obtain the two sequences
\begin{equation}\label{eqn:2}
0\rightarrow F\rightarrow \bigwedge^2 V\otimes\cO\rightarrow \det E' \rightarrow 0
\end{equation}
and
\begin{equation}\label{eqn:3}
0\rightarrow \bigwedge^2 D_V(E')^*\rightarrow F\rightarrow D_V(E')^*\otimes E' \rightarrow 0.
\end{equation}
Now, from (\ref{eqn:1}), $D_V(E')$ has rank $2$, is generated and satisfies $h^0(D_V(E'))\ge4$ and $h^0(D_V(E')^*)=0$. It follows that any proper quotient of $D_V(E')$ is a non-trivial generated line bundle and therefore has degree $\ge d_1\ge\mu(D_V(E'))$. So $D_V(E')$ is at least semistable. Since $\mu(D_V(E'))=\mu(E')$ and $E'$ is stable, this implies that $h^0(D_V(E')^*\otimes E')\le1$. Hence, by (\ref{eqn:3}), $h^0(F)\le1$ and, by (\ref{eqn:2}), $h^0(\det E')\ge5$.

(iii) Since $\det E'$ is a subsheaf of $\det E$, (iii) follows from (ii) and the definition of $d_4$.\qed

\begin{remark}\label{r1}\begin{em}
Under the same hypotheses, suppose that $h^0(F)>0$, where $F$ is defined by (\ref{eqn:2}). Then $h^0(D_V(E')^*\otimes E')>0$ by (\ref{eqn:3}). From the proof of Proposition \ref{prop2}(ii), we see that $E'$ is stable and $D_V(E')$ is semistable, and these two bundles have the same slope. So $D_V(E')\cong E'$ and $h^0(F)=1$.
\end{em}\end{remark}

We have the following corollary, which is also an immediate consequence of \cite[Theorem 5.2]{ln}.

\begin{corollary}\label{cor1}
Let $E$ be a semistable bundle of rank $2$ with $h^0(E)=4$. Then $\gamma(E)\ge\min\{\Cl,\frac{d_4}2-2\}$.
\end{corollary}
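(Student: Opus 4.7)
The plan is to express $\gamma(E)$ explicitly and then dichotomise on whether $d_E$ lies below or at/above the threshold $2d_1$. For rank $2$ with $h^0(E)=4$, the definition gives $\gamma(E)=\tfrac{d_E}{2}-2$, so the corollary reduces to the purely numerical assertion $d_E\ge\min\{2\Cl+4,\,d_4\}$.

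For $d_E<2d_1$ I would just feed in the two preceding propositions: Proposition \ref{prop1}(iii) promotes the semistable $E$ to a stable bundle, so Proposition \ref{prop2}(iii) applies and yields $d_E\ge d_4$, whence $\gamma(E)\ge\tfrac{d_4}{2}-2$. For the complementary range $d_E\ge 2d_1$ the argument is entirely numerical: the defining line bundle of degree $d_1$ with $h^0\ge 2$ realises the classical Clifford bound $\Cl\le d_1-2$, so
\[
\gamma(E)=\tfrac{d_E}{2}-2\ge d_1-2\ge\Cl.
\]

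There is no genuine obstacle; the only thing requiring a moment of care is the boundary $d_E=2d_1$ with $E$ strictly semistable, but any such bundle is S-equivalent to $L_1\oplus L_2$ with both $L_i$ of degree $d_1$, and one readily checks $h^0(L_i)\ge 2$ (otherwise $h^0(E)<4$), so it is naturally swept into the second case. Combining the two cases then produces $\gamma(E)\ge\min\{\Cl,\tfrac{d_4}{2}-2\}$, which is the claim.
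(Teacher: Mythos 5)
Your argument is correct and is essentially the paper's own proof: both reduce the statement to the degree bound $d_E\ge\min\{2d_1,d_4\}$ via $\gamma(E)=\tfrac{d_E}{2}-2$, handle $d_E<2d_1$ by combining Proposition \ref{prop1}(iii) (stability) with Proposition \ref{prop2}(iii) ($d_E\ge d_4$), and use the standard inequality $\Cl\le d_1-2$ for the remaining range. Your extra discussion of the strictly semistable boundary case $d_E=2d_1$ is harmless but unnecessary, since that case is already covered by the purely numerical estimate $\gamma(E)\ge d_1-2\ge\Cl$.
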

\begin{proof}
From the definition of $\gamma(E)$, we see that it is sufficient to show that $d_E\ge\min\{2d_1,d_4\}$. This follows at once from Propositions \ref{prop2}(iii) and \ref{prop1}(iii).
\end{proof}

\begin{remark}\begin{em}\label{r8}
Propositions \ref{prop1} and \ref{prop2} lead to the following trichotomy:
\begin {itemize}
\item[(a)] $d_4>2d_1$. If also there exist two non-isomorphic line bundles of degree $d_1$ with $h^0=2$, then $\widetilde{B}(2,d,4)\ne\emptyset$ if and only if $d\ge2d_1$ and $B(2,d,4)\ne\emptyset$ if and only if $d\ge2d_1+1$. If there is only one line bundle of degree $d_1$ with $h^0=2$, the same holds except that $B(2,2d_1+1,4)$ and $B(2,2d_1+2,4)$  could possibly be empty (see Proposition \ref{prop4} for a case in which $B(2,2d_1+1,4)=\emptyset$).
\item[(b)] $d_4=2d_1$. The same holds as in case (a) except that it is now possible that $B (2,2d_1,4)\ne\emptyset$; when this happens, $B(2,d,4)\ne\emptyset$ if and only if $d\ge2d_1$ (see Proposition \ref{prop1}(iv)).  
\item[(c)] $d_4<2d_1$. Let $d_{\min}$ be the minimum value of $d$ for which $\widetilde{B}(2,d,4)\ne\emptyset$. Then, by Propositions \ref{prop1} and \ref{prop2}(iii), $d_4\le d_{\min}\le2d_1$. If $d_{\min}<2d_1$, then $B(2,d,4)\ne\emptyset$ if and only if $d\ge d_{min}$. If $d_{min}=2d_1$, the situation  is the same as in case (b).
\end{itemize}\end{em}\end{remark}

\begin{example}\begin{em}\label{ex1}
If $C$ is a Petri curve of genus $g$, we have $d_r=g+r-\left[\frac{g}{r+1}\right]$. It follows that, if $g$ is even, then $d_4>2d_1$ for $g\le8$. For $g=4,6,8$, case (a) applies and $B(2,d,4)\ne\emptyset$ if and only if $d\ge2d_1+1$. If $g=10,12,14$, then case (b) applies and it may happen that $B(2,2d_1,4)\ne\emptyset$. The case $g=10$ will be discussed fully in section \ref{g=10}.
\end{em}\end{example}
\begin{example}\begin{em}\label{ex2}
If $C$ is a smooth plane curve of degree $\delta\ge5$, then $d_1=\delta-1$ and $d_4=2\delta-1$, so we are in case (a). Moreover there exist infinitely many line bundles of degree $d_1$ with $h^0=2$, so $\widetilde{B}(2,d,4)\ne\emptyset$ if and only if $d\ge2\delta-2$ and $B(2,d,4)\ne\emptyset$ if and only if $d\ge2\delta-1$.
\end{em}\end{example}
\begin{example}\begin{em}\label{ex3}
If $C$ is a curve of Clifford dimension $\ge4$ (i.e. none of $d_1$, $d_2$, $d_3$ computes $\Cl$), then we are in case (c). In fact, $d_r$ computes $\Cl$ for some $r\ge4$. By \cite[Corollary 3.5]{elms}, $d_r\ge 4r-3>3r$. So
$$d_4\le d_r-r+4<2d_r-4r+4=2\Cl+4<2d_1.$$
If $C$ has Clifford dimension $3$, then $C$ is an intersection of $2$ cubics in ${\mathbb P}^3$ (see \cite{mar}) and has genus $10$ with $\Cl=3$. It follows that $d_1=6$ and $d_4=12$, so we are in case (b).
\end{em}\end{example}

\section{Bundles of degree $d_4$}\label{d4}
In this section, we consider the case where $d_4\le2d_1$ and $d_E=d_4$ and state and prove our first main theorem.
\begin{lemma}\label{lem1}
Suppose $d_4\le2d_1$ and $E\in B(2,d_4,4)$, then
\begin{itemize}
\item $h^0(E)=4$;
\item $h^0(\det E)=5$ and $\det E$ is generated;
\item $E$ is generated;
\item the canonical homomorphism $\bigwedge^2H^0(E)\to H^0(\det E)$ is surjective;
\item $D_{H^0(E)}(E)\cong E$.
\end{itemize}
\end{lemma}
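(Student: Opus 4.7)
The strategy is to bootstrap off Proposition~\ref{prop2} and Remark~\ref{r1}, exploiting the extremality of $d_4$ at each step. First I would establish generation of $E$ and the lower bound $h^0(\det E)\ge 5$: letting $E'\subseteq E$ be the image of the evaluation map, Proposition~\ref{prop2}(ii) gives $h^0(\det E')\ge 5$, and since $\det E'$ is a line subsheaf of $\det E$ (of degree $d_4$) while $d_4$ is by definition the minimum degree of a line bundle with $h^0\ge 5$, we obtain $\det E'=\det E$ and hence $E'=E$.

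Next I would show that every $4$-dimensional subspace $V\subset H^0(E)$ generates $E$. The image $E_V$ of $V\otimes\mathcal O\to E$ has rank $2$ by Proposition~\ref{prop2}(i), hence is a rank-$2$ subbundle of $E$. If $E_V\subsetneq E$, then by the arguments used in the proof of Proposition~\ref{prop2}(ii) (stability of $E_V$ follows from $\mu(E_V)<\mu(E)\le d_1$ together with the generated-quotient argument), one obtains $E_V\in B(2,\deg E_V,4)$ with $\deg E_V<d_4$, contradicting Proposition~\ref{prop2}(iii).

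Fix such a $V$. The construction of Proposition~\ref{prop2}(ii) now applies verbatim with $E'=E$ to produce an exact sequence
\[
0\to F\to\bigwedge^2 V\otimes\mathcal O\to\det E\to 0
\]
with $h^0(F)\le 1$; by Remark~\ref{r1}, $h^0(F)=1$ is equivalent to $D_V(E)\cong E$. I would rule out $h^0(F)=0$: that case would force $\bigwedge^2 V\hookrightarrow H^0(\det E)$, yielding a line bundle of degree $d_4\le 2d_1$ with at least six sections, which is excluded by the gonality/Clifford inequalities available in this range (this is precisely where the hypothesis $d_4\le 2d_1$ is essential). So $h^0(F)=1$, $D_V(E)\cong E$, and the map $\bigwedge^2 V\to H^0(\det E)$ has image of dimension exactly $5$, giving $h^0(\det E)=5$ and surjectivity of the wedge map. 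Since $D_V(E)$ has rank $\dim V-2=2$, the isomorphism with $E$ applied with $V=H^0(E)$ forces $h^0(E)=4$ and yields the fifth bullet; the equality $h^0(E)=4$ itself follows by excluding $h^0(E)\ge 5$, in which case two distinct $4$-dim generating subspaces $V_1,V_2$ both satisfy $D_{V_i}(E)\cong E$ and a comparison using the uniqueness clause of Remark~\ref{r1} leads to a contradiction. Generation of $\det E$ is then immediate from the sheaf-level surjection $\bigwedge^2 H^0(E)\otimes\mathcal O\twoheadrightarrow\det E$.

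The main obstacle is the simultaneous exclusion of $h^0(F)=0$ (equivalently $h^0(\det E)\ge 6$) and of the $h^0(E)\ge 5$ scenario; both points rely crucially on the hypothesis $d_4\le 2d_1$ to force the delicate extremal equalities that make the lemma work.
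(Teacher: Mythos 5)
Most of your argument coincides with the paper's: generation of $E$ and $h^0(\det E)=5$ via Proposition~\ref{prop2}(ii) and the minimality of $d_4$, then $h^0(F)>0$ (a two-dimensional count: $\dim\bigwedge^2V=6>5=h^0(\det E)$), so Remark~\ref{r1} gives $h^0(F)=1$ and $D_V(E)\cong E$, and surjectivity of $\bigwedge^2H^0(E)\to H^0(\det E)$ follows from the sequence \eqref{eqn:2}. Your extra step that every $4$-dimensional subspace generates is correct but superfluous once $h^0(E)=4$ is known.

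The genuine gap is your treatment of the first bullet, $h^0(E)=4$. You argue that $D_{H^0(E)}(E)\cong E$ together with $\rk D_V(E)=\dim V-2$ forces $\dim H^0(E)=4$; but the isomorphism $D_V(E')\cong E'$ of Remark~\ref{r1} is only available for $4$-dimensional $V$ (that is what makes $D_V(E')$ a rank-$2$ semistable bundle of the same slope as the stable bundle $E'$, so that a nonzero map is an isomorphism), so applying it with $V=H^0(E)$ presupposes exactly what you are trying to prove. Your fallback --- that if $h^0(E)\ge5$ two distinct $4$-dimensional generating subspaces $V_1,V_2$ with $D_{V_i}(E)\cong E$ contradict ``the uniqueness clause of Remark~\ref{r1}'' --- invokes a clause that does not exist: Remark~\ref{r1} asserts nothing about uniqueness of $V$, and there is no evident reason why $D_{V_1}(E)\cong E\cong D_{V_2}(E)$ is contradictory. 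Note that the bound $h^0(\det E)\ge5$ alone cannot rule out $h^0(E)\ge5$; one needs the sharper estimate of \cite[Lemma 3.9]{PR}, namely $h^0(\det E)\ge 2s+1$ when $h^0(E)=2+s$ and every line subbundle $L$ has $h^0(L)\le1$ (which holds by Proposition~\ref{prop2}(i)). This gives $d_4=d_{\det E}\ge d_{2s}$, impossible for $s\ge3$, and that is how the paper proves $h^0(E)=4$. A minor further point: excluding $h^0(\det E)\ge6$ (your exclusion of $h^0(F)=0$) needs only $d_4<d_5$, not the hypothesis $d_4\le2d_1$, which instead enters through Proposition~\ref{prop2} itself.
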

\pf Suppose $E\in B(2,d_4,4)$ with $h^0(E)=2+s$. By Proposition  \ref{prop2}(i), we know that $h^0(L)\le1$ for every line subbundle $L$ of $E$. It follows from \cite[Lemma 3.9]{PR} that $h^0(\det E)\ge2s+1$ and therefore $d_E=d_{\det E}\ge d_{2s}$. This is a contradiction if $s>2$; so $h^0(E)=4$. Proposition \ref{prop2}(ii) implies that $h^0(\det E)\ge5$ and that $d_{\det E'}\ge d_4$. Since $d_E=d_4$, this shows that $h^0(\det E)=5$ and $\det E$ is generated; moreover $E'=E$ and $E$ is generated. It follows from (\ref{eqn:2})  that $h^0(F)>0$. The remaining parts of the lemma now follow from Remark \ref{r1} and (\ref{eqn:2}).\qed
\begin{theorem}\label{main}
Suppose $d_4\le2d_1$. Then
\begin{itemize}
\item[(i)] if $B(2,d_4,4)\ne\emptyset$, there exists a line bundle $M\in B(1,d_4,5)$ such that $\phi_M(C)$ is contained in a quadric;
\item[(ii)] if $d_4<2d_1$ and there exists $M\in B(1,d_4,5)$ such that $\phi_M(C)$ is contained in a quadric, then $B(2,d_4,4)\ne\emptyset$. 
\end{itemize}
\end{theorem}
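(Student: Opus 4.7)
The plan is to translate both directions into the Plücker-embedded Grassmannian picture. A rank-$2$ bundle $E$ generated by a $4$-dimensional subspace $V^*\subset H^0(E)$ corresponds to a morphism $f:C\to G(2,V^*)$, and composing with the Plücker embedding $G(2,V^*)\hookrightarrow\mathbb P(\bigwedge^2 V^*)\cong\mathbb P^5$ produces a map with line bundle $\det E$. The Plücker quadric, restricted to the hyperplane cut out by the kernel of the natural map $\bigwedge^2 V^*\to H^0(\det E)$, should become the quadric of the theorem.

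For (i), I would set $M:=\det E$. Lemma~\ref{lem1} gives $h^0(E)=4$, $h^0(M)=5$, both $M$ and $E$ generated, and $\lambda:\bigwedge^2 H^0(E)\to H^0(M)$ surjective with $\dim\ker\lambda=1$. Since $E$ is generated, its evaluation defines $f:C\to G(2,H^0(E)^*)$; composition with the Plücker embedding into $\mathbb P^5=\mathbb P(\bigwedge^2 H^0(E)^*)$ pulls back $\cO(1)$ to $M$, so the map factors through $\phi_M:C\to\mathbb P(H^0(M)^*)\cong\mathbb P^4$, where $\mathbb P^4\hookrightarrow\mathbb P^5$ is the hyperplane $\mathbb P(\lambda^*(H^0(M)^*))$. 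Since the Plücker quadric $G(2,H^0(E)^*)\subset\mathbb P^5$ is irreducible and $4$-dimensional it cannot contain any $\mathbb P^4$, so $\phi_M(C)$ lies in the genuine quadric $G(2,H^0(E)^*)\cap\mathbb P^4\subset\mathbb P^4$.

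For (ii), I would reverse the above. Given a generated $M\in B(1,d_4,5)$ with $\phi_M(C)$ inside a quadric $Q\subset\mathbb P(H^0(M)^*)$, realize $Q=G(2,V^*)\cap\mathbb P(H^0(M)^*)$ for some $4$-dimensional $V$ and some hyperplane embedding $\mathbb P(H^0(M)^*)\hookrightarrow\mathbb P(\bigwedge^2 V^*)$; let $f$ be the composition $C\xrightarrow{\phi_M}Q\hookrightarrow G(2,V^*)$ and pull back the tautological sequence to obtain $E:=f^*\cQ$ of rank $2$ with $\det E=M$ and a canonical surjection $V^*\otimes\cO_C\twoheadrightarrow E$. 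For the injectivity $V^*\hookrightarrow H^0(E)$: a nonzero $v\in V^*$ in the kernel would satisfy $\langle v\rangle\subset f(p)$ for all $p$, so $f(C)$ would lie in the Schubert plane of $2$-planes containing $v$, a linear $\mathbb P^2\subset\mathbb P^5$; its intersection with $\mathbb P^4$ has dimension $\le 2$, contradicting the non-degeneracy of the complete linear system $\phi_M$. For stability, any destabilizing line subbundle $L\subset E$ yields a generated line quotient $N=E/L$ with $d_N\le d_4/2<d_1$; a generated line bundle of degree $<d_1$ must be $\cO$, since for $d_N>0$ generation forces $h^0(N)\ge 2$ hence $d_N\ge d_1$, while $d_N<0$ is impossible. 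A surjection $E\twoheadrightarrow\cO$ then corresponds to some $0\ne v\in V$ with $f(p)\subset v^\perp\subset V^*$ for all $p$, so $f(C)\subset G(2,v^\perp)\cong\mathbb P^2\subset\mathbb P^5$, giving the same contradiction. Hence $E\in B(2,d_4,4)$.

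The main obstacle is the realization step in (ii). Hyperplane sections of the rank-$6$ Plücker quadric in $\mathbb P^5$ have rank $5$ (smooth) or $4$ (tangent, a cone over $\mathbb P^1\times\mathbb P^1$) but no lower, so the construction covers quadrics $Q$ of rank $\ge 4$ directly. For $Q$ of rank $\le 3$ one would need either to argue that $\phi_M(C)$ is always contained in a higher-rank quadric (e.g.\ because the linear system of quadrics through $\phi_M(C)$ has positive dimension and so generically contains smooth members), or to handle the cone case separately, say by projecting $C$ from the vertex of $Q$ and building $E$ as an extension from the resulting pencil on $C$. Secondarily, the strict inequality $d_4<2d_1$ is used precisely so that $d_N\le d_4/2$ implies $d_N<d_1$, making the ``generated forces $N=\cO$'' step work.
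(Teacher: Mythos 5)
Your proof of (i) is essentially the paper's proof: Lemma \ref{lem1} plus the factorization of the Pl\"ucker map through the hyperplane $\PP(H^0(\det E)^*)$, so that $\phi_{\det E}(C)$ lies on $\Gr(2,4)\cap\PP^4$; your extra check that this intersection is a genuine quadric (the irreducible Pl\"ucker quadric contains no $\PP^4$) is correct and harmless.

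For (ii) your route is genuinely different from the paper's, and it contains a gap which you flag but do not close: the realization of the given quadric $Q\subset\PP(H^0(M)^*)$ as a hyperplane section of the Pl\"ucker quadric only works when $\rk Q\ge4$, and neither of your proposed fixes for $\rk Q\le3$ works as stated. The first fails outright: the space of quadrics through $\phi_M(C)$ is $\ker\psi$ from \eqref{eqnpsi}, and nothing guarantees it has positive dimension (in the genus-$10$ application it is expected to be exactly one-dimensional), so you cannot pass to a smooth member of a pencil; the second (projection from the vertex) is not carried out. The gap is, however, easy to fill under the hypothesis $d_4<2d_1$: a quadric of rank $\le2$ cannot contain the non-degenerate curve $\phi_M(C)$, and if $\rk Q\in\{3,4\}$ the planes on $Q$ (two rulings when the rank is $4$, one when it is $3$) cut out pencils $L$, $L'$ on $C$ with $h^0(L),h^0(L')\ge2$ and $L\otimes L'\cong M$ (cf.\ Remark \ref{r2}(ii), whose argument does not use $d_4=2d_1$), whence $d_4=d_L+d_{L'}\ge2d_1$, a contradiction. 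So under the hypothesis of (ii) every quadric through $\phi_M(C)$ is smooth and your construction applies; your injectivity and stability arguments (the Schubert-plane contradiction replacing the paper's $h^0(E^*)=0$) are fine. By contrast, the paper's proof of (ii) sidesteps the rank issue entirely: it chooses a line $\ell\subset q$ not meeting $\phi_M(C)$ (a dimension count valid regardless of the rank of $q$), takes $W\subset H^0(M)$ to be the $3$-dimensional annihilator of $\ell$, shows $W$ generates $M$ and that $N=\ker\bigl(W\otimes H^0(M)\to H^0(M^2)\bigr)$ has dimension $\ge4$, and defines $E$ by $0\to E^*\to W\otimes\cO\to M\to0$, so that $H^0(E)\cong N$; stability is then the same generated-quotient argument you give. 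Your Grassmannian inversion uses all four sections at once and is essentially the correspondence of Remark \ref{r5}, whereas the paper's use of a $3$-dimensional generating subspace makes the estimate $h^0(E)\ge4$ immediate without any discussion of which quadrics arise as hyperplane sections of $\Gr(2,4)$.
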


\pf (i) Let $E\in B(2,d_4,4)$. By Lemma \ref{lem1}, $E$ is generated with $h^0(E)=4$, so we have a morphism 
$$\phi_E: C\longrightarrow \mbox{Gr}(2,4)\hookrightarrow {\mathbb P}(\bigwedge^2H^0(E)^*).$$
The line bundle $\det E$ is also generated and $h^0(\det E)=5$, so we have a morphism $$\phi_{\det E}:C\longrightarrow {\mathbb P}(H^0(\det E)^*).$$
Moreover the canonical homomorphism $\bigwedge^2H^0(E)\to H^0(\det E)$ is surjective, so there is a canonical embedding of ${\mathbb P}(H^0(\det E)^*)$ in ${\mathbb P}(\bigwedge^2H^0(E)^*)$ as a hyperplane, giving a commutative diagram
$$\begin{array}{ccc}
C&\stackrel{\phi_E}{\longrightarrow}&{\mathbb P}(\bigwedge^2H^0(E)^*)={\mathbb P}^5\\
\parallel&&\cup\\
C&\stackrel{\phi_{\det E}}{\longrightarrow}&{\mathbb P}(H^0(\det E)^*)={\mathbb P}^4.
\end{array}$$
It follows that $\phi_{\det E}(C)$ is contained in the quadric $\mbox{Gr}(2,4)\cap{\mathbb P}^4$. 

(ii) Suppose that $M$ satisfies the hypothesis of (ii) and let $q$ be a quadric containing $\phi_M(C)$ defined by an element $[q]$ of the kernel of the canonical map
\begin{equation}\label{eqnpsi}
\psi:S^2H^0(M)\longrightarrow H^0(M^2).
\end{equation}
We can choose a linear subspace $W$ of $H^0(M)$ of dimension $3$ such that $W$ generates $M$ and the kernel $N$ of the linear map $W\otimes H^0(M)\to H^0(M^2)$ has dimension $\ge4$. For this, first choose a line $\ell$ on $q$ which does not meet $\phi_M(C)$ (a simple dimensional calculation shows that such lines exist). Now let $W$ be the 3-dimensional subspace of $H^0(M)$ which annihilates the 2-dimensional subspace of $H^0(M)^*$ defined by $\ell$. The fact that $\ell$ does not meet $\phi_M(C)$ implies that $W$ generates $M$, while the fact that $\ell$ lies on $q$ implies that the image of $W\otimes H^0(M)$ in $S^2H^0(M)$ contains $[q]$. It follows that the image of $N$ in $S^2H^0(M)$ contains $[q]$. Since the kernel of the map $N\to S^2H^0(M)$ is $\bigwedge^2W$, it follows at once that $\dim N\ge4$.

Now define $E^*$ to be the kernel of the evaluation map $W\otimes\cO\to M$, so that we have an exact sequence
\begin{equation}\label{eqn9}
0\longrightarrow E^*\longrightarrow W\otimes\cO\longrightarrow M\longrightarrow0
\end{equation}
and $\det E\cong M$. Tensoring (\ref{eqn9}) by $M$ and recalling that $E^*\otimes M\cong E$, we obtain
\begin{equation}\label{eqn10}
0\longrightarrow E\longrightarrow W\otimes M\longrightarrow M^2\longrightarrow0.
\end{equation}
So $H^0(E)\cong N$ and $h^0(E)\ge4$. By (\ref{eqn9}), $h^0(E^*)=0$ and $E$ is generated, so any quotient line bundle $L$ of $E$  has $h^0(L)\ge2$. So $d_L\ge d_1$ and $E$ is stable. \qed

\begin{remark}\label{r2}\begin{em}
(i)  If $d_4=2d_1$, the argument of (ii) is still valid except that there is now the possibility that $E$ is strictly semistable. In this case, we have an exact sequence
$$0\longrightarrow L\longrightarrow E\longrightarrow L'\longrightarrow0,$$
with $L$, $L'$ of degree $d_1$ with $h^0(L)=h^0(L')=2$; moreover $L\otimes L'\cong M$. If $L\cong L'$ and $C$ is Petri, this implies that $h^1(M)=0$ and hence $h^0(M)\le4$ since $2d_1\le g+3$; this is a contradiction. 
 For a general curve of even genus $g\ge4$, Voisin has shown \cite[Proposition 4.1(i)]{Voi} that, if $L\not\cong L'$, then $h^1(L\otimes L')=1$; since $d_{L\otimes L'}=2d_1=g+2$, it follows that $h^0(L\otimes L')=4$, again a contradiction. The condition $d_4=2d_1$ holds for general curves of genera $10$, $12$ and $14$, so in these cases we can say that $B(2,g+2,4)\ne\emptyset$ if and only if there exists $M\in B(1,g+2,5)$ such that $\phi_M(C)$ is contained in a quadric.

(ii) Suppose $d_4=2d_1$, $M\in B(1,d_4,5)$ and $\phi_M(C)$ is contained in a quadric $q$. Then $q$ cannot have rank $\le2$ since $\phi_M(C)$ is non-degenerate. If $q$ has rank $3$ or $4$, then $M\cong L\otimes L'$ with $h^0(L)=h^0(L')=2$ (the line bundles $L$ and $L'$ correspond to the two systems of planes lying on $q$ when the rank is $4$; when the rank is $3$, we have only one system of planes and $L\cong L'$). It follows that, if $M\not\cong L\otimes L'$, then $\phi_M(C)$ cannot be contained in a quadric of rank $\le4$. Since every pencil of quadrics in $\PP^4$ contains a member of rank $\le4$, this means that, in this case, any quadric containing $\phi_M(C)$ is unique and has rank $5$. When $C$ is Petri, $\phi_M(C)$ can never be contained in a quadric of rank $3$, since as we have seen, $M\cong L^2$ with $h^0(L)\ge2$ implies $h^1(M)=0$ and hence $h^0(M)\le4$.

(iii) It is of interest to note that the kernel of the map $\psi$ of \eqref{eqnpsi} can be identified with the Koszul cohomology group $K_{1,1}(C,M)$.
\end{em}\end{remark}

\begin{remark}\begin{em}\label{r5}
Suppose $d_4<2d_1$, let $E\in B(2,d_4,4)$ and write $M=\det E$. For any subspace $V$ of $H^0(E)$ of dimension $3$ which generates $E$, we have an exact sequence
$$0\longrightarrow M^*\longrightarrow V\otimes{\mathcal O}\longrightarrow E\longrightarrow 0.$$
Dualising and using the fact that $h^0(E^*)=0$, we see that  $V^*$ is a subspace of $H^0(M)$. Now tensoring by $M$ and writing $W=V^*$ yields a sequence \eqref{eqn10}. It follows that $H^0(E)$ can be identified with the kernel $N$ of the map $W\otimes H^0(M)\to H^0(M^2)$. The image of $N$ in $S^2H^0(M)$ has dimension $1$ and therefore coincides with the kernel of the map $\psi$ of \eqref{eqnpsi}, which defines the unique quadric $q$ on which $\phi_M(C)$ lies (compare Remark \ref{r2}(ii)). As in the proof of Theorem \ref{main}, we see that $W$ corresponds to a line $\ell$ on $q$ not meeting $\phi_M(C)$ and also that we can recover $(E,V)$ from $(M,\ell)$. Thus we obtain a bijective correspondence between the sets
$$T_1:=\{(E,V)|E\in B(2,d_4,4), V\subset H^0(E), \dim V=3, V\mbox{ generates }E\}$$
and
$$T_2:=\{(M,\ell)|M\in B(1,d_4,5)|, \phi_M(C)\subset q, \ell \mbox{ a line on }q\mbox{ not meeting }\phi_M(C)\}.$$
The sets $T_1$, $T_2$ have natural structures of quasi-projective variety and the bijective correspondence then becomes an isomorphism $f:T_1\to T_2$. The subvarieties of $T_1$, $T_2$ corresponding to fixed $E$ and $M$ respectively are (if non-empty) both 3-dimensional irreducible quasi-projective varieties. Note further that $f(E,V)=(\det E,\ell)$ for some $\ell$. It follows that $f$ induces an embedding of $B(2,d_4,4)$ in $B(1,d_4,5)$.

If $d_4=2d_1$, the same argument works, but we must now exclude from $T_1$ those $(E,V)$ for which $\det E\cong L\otimes L'$ and from $T_2$ those $(M,\ell)$ for which $M\cong L\otimes L'$, where in both cases $h^0(L)=h^0(L')=2$.
\end{em}\end{remark}

\begin{remark}\label{r3}\begin{em}
If $d_1$ computes $\Cl$ and $d_4<2d_1$, then any $E\in B(2,d_4,4)$ has $\gamma(E)<\Cl$. Moreover, by Corollary \ref{cor1}, $\gamma(E)=\Clt$. So this is a possible source of examples for which $\Clt<\Cl$.
\end{em}\end{remark}

\begin{remark}\begin{em}\label{r4}
For a Petri curve $C$, the expected dimension of $B(2,d_4,4)$ is
$$\beta(2,d_4,4)=4g-3-4(4-d_4+2g-2).$$
Substituting $d_4=g+4-\left[\frac{g}5\right]$, we obtain
$$\beta(2,d_4,4)=5-4\left[\frac{g}5\right].$$
For $g\ge10$, this is negative and also $d_4\le2d_1$. If we fix $M\in B(1,d_4,5)$, we have $h^1(M)\ge2$ since $d_4\le g+2$. Osserman's result \cite[Theorem 1.3]{Oss} gives a lower bound 
$$\beta(2,d_4,4)-g+12=17-g-4\left[\frac{g}5\right]$$
for the dimension of the variety $\{E\in B(2,d_4,4)|\det E\cong M\}$,
which is still negative. So, if $B(2,d_4,4)\ne\emptyset$, this would show that Osserman's bound is not sharp even in the case where $h^1(M)=2$. 
\end{em}\end{remark}

In many cases, we do however have an upper bound for $\dim B(2,d_4,4)$.

\begin{proposition}\label{newprop}
Suppose $C$ is a Petri curve and either $d_4<2d_1$ or $d_4=2d_1$ and no $M\in B(1,d_4,5)$ has the form $M\cong L\otimes L'$ with $h^0(L)=h^0(L')=2$. Then
$$\dim B(2,d_4,4)\le g-5\left[\frac{g}5\right].$$

\end{proposition}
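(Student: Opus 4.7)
The strategy is to deduce the bound from the embedding of $B(2,d_4,4)$ into $B(1,d_4,5)$ provided by Remark \ref{r5}, and then to identify the dimension of $B(1,d_4,5)$ with the classical Brill--Noether number on the Petri curve $C$.

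First, I would verify that under the hypothesis of the proposition, every quadric containing $\phi_M(C)$ for some $M\in B(1,d_4,5)$ has rank $5$. By Remark \ref{r2}(ii), a quadric of rank at most $4$ containing $\phi_M(C)$ would force $M\cong L\otimes L'$ with $h^0(L)=h^0(L')=2$. When $d_4<2d_1$ this is impossible, since $\deg L+\deg L'=d_4<2d_1$ would force one of $L,L'$ to have degree less than $d_1$ and hence $h^0<2$. When $d_4=2d_1$, the existence of such an $M$ is ruled out by assumption. In particular the quadric threefold $q$ appearing in Remark \ref{r5} is smooth, and its Fano variety of lines is irreducible of dimension $3$.

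With this in hand, I would invoke Remark \ref{r5} to obtain the isomorphism $f\colon T_1\to T_2$. The projections $\pi_1\colon T_1\to B(2,d_4,4)$ and $\pi_2\colon T_2\to \det\bigl(B(2,d_4,4)\bigr)\subset B(1,d_4,5)$ both have irreducible $3$-dimensional fibers: by Lemma \ref{lem1}, $h^0(E)=4$ for $E\in B(2,d_4,4)$, so a fiber of $\pi_1$ is an open subset of $\mathbb{G}(3,4)\cong\mathbb{P}^3$; and by the previous paragraph, a fiber of $\pi_2$ over $M$ is an open subset of the $3$-dimensional irreducible Fano variety of lines on the smooth quadric threefold $q$ containing $\phi_M(C)$. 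If $\det E_1=\det E_2=M$, then $f\bigl(\pi_1^{-1}(E_i)\bigr)$ for $i=1,2$ are two $3$-dimensional subvarieties of the irreducible $3$-dimensional fiber $\pi_2^{-1}(M)$, so they must coincide, which forces $E_1=E_2$. Thus $\det$ is injective and the dimensions match, giving $\dim B(2,d_4,4)\le\dim B(1,d_4,5)$.

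Finally, since $C$ is Petri, classical Brill--Noether theory gives $\dim B(1,d_4,5)=\dim W^4_{d_4}=\rho(g,4,d_4)$. Substituting $d_4=g+4-\left[\frac{g}{5}\right]$ in $\rho(g,4,d_4)=g-5(g-d_4+4)$ yields $g-5\left[\frac{g}{5}\right]$, which completes the argument. The main delicate step is the rank-$5$ check on the quadric, since that is exactly what underpins the irreducibility and correct dimension of the fibers of $\pi_2$ and hence the injectivity of $\det$; the rest of the proof is bookkeeping against Remark \ref{r5} and classical Brill--Noether theory.
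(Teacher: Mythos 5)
Your argument is correct and follows essentially the same route as the paper: the paper's proof simply cites the embedding $B(2,d_4,4)\hookrightarrow B(1,d_4,5)$ from Remark \ref{r5} (with the $d_4=2d_1$ exclusions vacuous under the stated hypothesis) and then uses $\dim B(1,d_4,5)=g-5\left[\frac{g}{5}\right]$ on a Petri curve. You merely spell out in more detail the content of that remark (rank-$5$ quadric, irreducible $3$-dimensional fibres, injectivity of $E\mapsto\det E$), which is a sound elaboration rather than a different method.
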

\begin{proof}
By Remark \ref{r5}, we have
$$\dim B(2,d_4,4)\le\dim B(1,d_4,5)=g-5\left[\frac{g}5\right].$$
\end{proof}

This holds in particular for any Petri curve of genus $11$, $13$ or $\ge15$ since then $d_4<2d_1$. It holds also for the general curve of genus $10$, $12$ or $14$ by Remark \ref{r2}.

\section{The case $g=10$}\label{g=10}
In this section, we prove the original main theorem of this paper as an application of Theorem \ref{main}.
\newpage
\begin{theorem}\label{main2}
\begin{itemize}
\item[]
\item[(i)] Let $C$ be a general curve of genus $10$. Then $$B(2,d,4)\ne\emptyset\Longleftrightarrow d\ge13\Longleftrightarrow \beta(2,d,4)\ge0.$$
\item[(ii)] There exist Petri curves of genus $10$ for which $B(2,12,4)\ne\emptyset$. Moreover $\beta(2,12,4)<0$.
\end{itemize}
\end{theorem}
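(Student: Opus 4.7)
The plan is to combine the reduction of Remark \ref{r2}(i) and Theorem \ref{main} with explicit Brill--Noether arithmetic. First compute, for $g=10$,
$$\beta(2,d,4)=4(g-1)+1-4(4-d+2(g-1))=4d-4g-11=4d-51,$$
so $\beta(2,d,4)\ge0$ iff $d\ge13$, settling one equivalence in (i). A general curve of genus $10$ is Petri, so the gonality values are $d_1=g+1-[g/2]=6$ and $d_4=g+4-[g/5]=12$, giving $d_4=2d_1$. Because $\rho(g,1,d_1)=0$ the set $W^1_6(C)$ is finite but contains at least two non-isomorphic pencils on a Petri curve, so Proposition \ref{prop1}(ii) yields $B(2,d,4)\ne\emptyset$ for all $d\ge 2d_1+1=13$.

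For the converse emptiness statement in (i), assume $B(2,d,4)\ne\emptyset$ with $d\le 12=2d_1$. Proposition \ref{prop2}(iii) forces $d\ge d_4=12$, so the only case at issue is $d=12$. Here Remark \ref{r2}(i) reduces $B(2,12,4)\ne\emptyset$ for the general curve to the existence of some $M\in B(1,12,5)$ with $\phi_M(C)\subset\mathbb{P}^4$ contained in a quadric. On a general $C$, $\rho(10,4,12)=0$, so $W^4_{12}(C)$ is a finite set with $h^0(M)=5$ for each $M$; moreover $\deg M^2=24>2g-2$ so $h^0(M^2)=15=\dim S^2H^0(M)$. Hence $\phi_M(C)$ lies on a quadric iff the multiplication map $\psi\colon S^2H^0(M)\to H^0(M^2)$ fails to be injective. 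The remaining task is to show that $\psi$ is an isomorphism for every $M\in W^4_{12}(C)$ on the general $C$. My approach is to exhibit even a single pair $(C_0,M_0)$ for which $\psi$ is injective --- most conveniently via a degeneration to a reducible or nodal curve of genus $10$ where $W^4_{12}$ and the multiplication map admit a direct limit-linear-series computation --- and then invoke upper semicontinuity of $\dim\ker\psi$ on the (irreducible) parameter space of pairs $(C,M)$ over $\mathcal{M}_{10}$, whence the injective locus is a nonempty open subset of $\mathcal{M}_{10}$.

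For (ii), the task is to display a Petri curve $C$ of genus $10$ together with $M\in W^4_{12}(C)$ whose image $\phi_M(C)$ lies on a quadric of rank $5$; by Remark \ref{r2}(ii) such an $M$ cannot be of the form $L\otimes L'$ with $h^0(L)=h^0(L')=2$, so the bundle built in the proof of Theorem \ref{main}(ii) from $(M,\ell)$ is automatically stable. The natural source is a K3 surface: I would choose $S$ with Picard lattice containing classes $H,D$ satisfying $H^2=18$ and $H\cdot D=12$, with numerical invariants arranged so that $M:=D|_C$ has $h^0(M)\ge5$ for $C\in|H|$ and so that the projective geometry of $S$ in its $|H|$-embedding supplies a quadric through $\phi_M(C)$; surjectivity of the period map produces such an $S$, and Lazarsfeld \cite{Laz} guarantees that a general $C\in|H|$ is Petri. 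The numerical check $\beta(2,12,4)=48-51=-3<0$ is immediate. The two hard steps are therefore the semicontinuity step in (i) --- requiring the explicit pair $(C_0,M_0)$ with $\psi$ injective --- and the K3 construction in (ii), where one must pick the lattice so as to produce simultaneously the required $W^4_{12}$-class and the quadric, and verify that the resulting $M$ is not of the excluded form $L\otimes L'$.
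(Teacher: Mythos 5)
Your reductions are exactly those of the paper: the numerics $\beta(2,d,4)=4d-51$, the values $d_1=6$, $d_4=12$, non-emptiness for $d\ge13$ via Proposition \ref{prop1}(ii), emptiness for $d<12$ via Proposition \ref{prop2}(iii), and the translation of the case $d=12$ into the (non-)injectivity of $\psi$ in \eqref{eqnpsi} for $M\in B(1,12,5)$ via Theorem \ref{main} and Remark \ref{r2}, using $\dim S^2H^0(M)=h^0(M^2)=15$. But the two steps you yourself flag as ``hard'' are precisely the mathematical content of the theorem, and your plans for them do not close the gap. For (i), the paper quotes Voisin's result (\cite[Proposition 4.2]{Voi}) that $\psi$ is surjective for every $M\in B(1,12,5)$ on a general curve of genus $10$ --- a genuinely deep fact (its failure locus is the divisor of curves lying on K3 surfaces), not a routine limit-linear-series exercise. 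Your degeneration is not carried out, and even granting it, your semicontinuity argument is logically insufficient as stated: exhibiting a single pair $(C_0,M_0)$ with $\psi$ injective only shows the good locus of \emph{pairs} is non-empty and open; to conclude that the \emph{general curve} has $\psi$ injective for all of its (finitely many, but there are $42$) bundles $M\in B(1,12,5)$, you need a curve $C_0$ on which \emph{every} such $M$ behaves well, together with properness of the relative $W^4_{12}$ over moduli so that the bad locus of curves is closed. One good pair does not exclude that every curve carries some bad $M$.

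For (ii), the paper takes a K3 surface $S$ with $\Pic S$ generated by the class of $C$, so that Lazarsfeld \cite{Laz} gives the Petri property, and then obtains the non-surjectivity (hence non-injectivity) of $\psi$ for $M\in B(1,12,5)$ from Voisin's results for curves on K3 surfaces (\cite[Theorem 0.3, Proposition 3.2]{Voi}, with the hypotheses checked via the proofs of her Propositions 4.1 and 4.12); here $M$ exists on $C$ by classical Brill--Noether theory and need not be restricted from $S$. Your proposal instead requires a rank-two lattice containing $H$ and $D$ with $H\cdot D=12$, but then Lazarsfeld's theorem no longer applies automatically (the Petri property typically fails for curves on K3 surfaces of higher Picard rank, and verifying it is exactly the delicate point), and the quadric through $\phi_M(C)$ is asserted rather than produced. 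Finally, your stability argument is flawed: Remark \ref{r2}(ii) says that if $M\not\cong L\otimes L'$ then any quadric through $\phi_M(C)$ has rank $5$; it does \emph{not} say that lying on a rank-$5$ quadric forces $M\not\cong L\otimes L'$ (if $M\cong L\otimes L'$ the kernel of $\psi$ may contain a pencil whose general member has rank $5$). The correct exclusion, used in Remark \ref{r2}(i) and in the paper's proof, is that on the curves in question $h^0(L\otimes L')\le4<5$ (Petri for $L\cong L'$, Voisin's Proposition 4.1(i) for $L\not\cong L'$), so no $M\in B(1,12,5)$ has this form and semistable but unstable $E$ cannot occur.
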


\pf (i) For a general curve of genus $10$, we have $d_1=6$ and $d_4=12$; moreover, there exist non-isomorphic line bundles of degree $6$ with $h^0=2$ (in fact, by Castelnuovo's formula \cite[Ch V, formula (1.2)]{ACGH}, there are precisely $42$ isomorphism classes of such bundles). It follows from Propositions \ref{prop1} and \ref{prop2} that $B(2,d,4)=\emptyset$ for $d<12$ and $B(2,d,4)\ne\emptyset$ for $d>12$. It remains to consider $B(2,12,4)$.

Suppose that $B(2,12,4)\ne\emptyset$. Then, by Theorem \ref{main}(i), there exists a line bundle $M\in B(1,12,5)$ such that $\phi_M(C)$ is contained in a quadric, in other words the map $\psi$ of \eqref{eqnpsi} is not injective.  Since $S^2H^0(M)$ and $H^0(M^2)$  both have dimension $15$ (the first since $h^0(M)=5$, the second by Riemann-Roch), this means that $\psi$ is not surjective. However, for a general curve $C$, $\psi$ is always surjective \cite[Proposition 4.2]{Voi}. Hence $B(2,12,4)=\emptyset$.

Finally $\beta(2,d,4)=4d-51\ge0\Longleftrightarrow d\ge13$.

(ii) For any genus $g$, there exists a Petri curve $C$ which lies on a K3 surface $S$ and whose class generates $\mbox{Pic}\,S$ \cite{Laz}. For $g$ even, $g\ge10$, it follows from the proofs of \cite[Propositions 4.1, 4.12]{Voi} that the hypotheses of \cite[3.1]{Voi} are satisfied. Now let $M\in B(1,12,5)$. Then, by \cite[Theorem 0.3, Proposition 3.2]{Voi} (our $M$ becomes $K_C-L$ in Voisin's notation) (see also \cite{W}), the map $\psi$
is not surjective. When $g=10$, this implies as above that $\psi$ is not injective and so $\phi_M(C)$ lies on a quadric. It follows from Theorem \ref{main}(ii) and Remark \ref{r2} that $B(2,12,4)\ne\emptyset$. Finally, $\beta(2,12,4)=-3<0$. 
\qed

\begin{proposition}\label{prop6}
Let $C$ be a general curve of genus $10$. Then there exists a stable bundle $E$ computing $\Clt$ if and only if there exist line bundles $M$, $M'$ with 
\begin{equation}\label{eqn12}
d_M=d_{M'}=12,\  h^0(M)=h^0(M')=5
\end{equation} 
such that
\begin{equation}\label{eqn11}
H^0(M)\otimes H^0(M')\to H^0(M\otimes M')
\end{equation}
is not surjective.
\end{proposition}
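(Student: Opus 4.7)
The plan is to classify the possible invariants of a stable rank 2 bundle computing $\Clt$ on a general curve of genus 10, show the surviving case must arise as a specific extension, and then identify the cohomological obstruction with the non-surjectivity of the multiplication map via Serre duality.

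First I would verify that $\Clt = 4$: the upper bound comes from $L \oplus L$ with $L$ computing $\Cl = 4$, and the lower bound from Corollary \ref{cor1} together with $d_4 = 12$. Hence a stable $E$ computes $\Clt$ iff $\gamma(E) = 4$, that is $h^0(E) = d_E/2 - 2$. The constraints $h^0(E) \ge 4$ and $\mu(E) \le 9$ leave four possibilities for $(h^0(E), d_E)$: $(4,12), (5,14), (6,16), (7,18)$. The first is ruled out by Theorem \ref{main2}(i). For $(5,14)$ and $(6,16)$, using the Petri values $d_1 = 6, d_2 = 9, d_3 = 11$ on a general genus 10 curve together with the stability bound $d_L < d_E/2$, one checks that any line subbundle $L$ with $h^0(L) \ge 2$ would force the quotient $Q = E/L$ into a Brill-Noether empty locus; and in the absence of such an $L$, \cite[Lemma 3.9]{PR} (as used in Lemma \ref{lem1}) forces $h^0(\det E)$ into a Brill-Noether empty locus for $\det E$. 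Only $(7,18)$ survives.

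For the $(7,18)$ case a similar case analysis shows $E$ must fit in $0 \to L \to E \to Q \to 0$ with $L \in B(1,6,2)$, $Q \in B(1,12,5)$, and $h^0(Q) = 5$; the requirement $h^0(E) = h^0(L) + h^0(Q) = 7$ is then equivalent to the vanishing of the connecting homomorphism $\delta : H^0(Q) \to H^1(L)$. Put $M = Q$ and $M' = K \otimes L^{-1}$; both lie in $B(1,12,5)$ via the Serre-dual bijection $B(1,12,5) \leftrightarrow B(1,6,2)$ given by $N \mapsto K \otimes N^{-1}$. Serre duality also identifies $\Ext^1(Q, L) = H^1(L \otimes Q^{-1}) \cong H^0(M \otimes M')^*$, and the standard cup product description of $\delta$ shows that $\delta = 0$ iff the extension class $e$ annihilates the image of the multiplication map $H^0(M) \otimes H^0(M') \to H^0(M \otimes M')$. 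The existence of a nonzero such $e$ is thus equivalent to the non-surjectivity of the multiplication map. This yields the forward direction and, modulo the stability of $E$, the converse.

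The main obstacle is verifying stability of $E$ for at least one such $e$ in the converse direction. Any destabilizing line subbundle $L_1 \subset E$ satisfies $d_{L_1} \ge 9 > d_L = 6$, so the composition $L_1 \hookrightarrow E \to Q$ is injective and identifies $L_1 = Q(-D)$ for an effective $D$ with $\deg D \le 3$; a pullback computation shows $L_1$ lifts to $E$ iff $e$ annihilates $H^0(M \otimes M' - D) \subset H^0(M \otimes M')$. Since $\deg(M \otimes M' - D) \ge 21 > 2g - 2$, each such annihilator has dimension $\deg D$, and the union $U$ as $D$ varies in effective divisors of degree at most $3$ has algebraic dimension at most $6$ in the $15$-dimensional space $H^0(M \otimes M')^*$. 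The delicate step is showing that the annihilator of the image of the multiplication map is not contained in $U$: representing elements of $\operatorname{Ann}(H^0(M \otimes M' - D))$ as combinations $\sum c_i \operatorname{ev}_{p_i}$ of point-evaluation functionals and using that $M, M'$ are base-point-free and that $D$ imposes independent conditions on $H^0(M')$ for generic $D$, one checks that such a combination annihilating the image of the multiplication forces all $c_i = 0$; the remaining special configurations, where $h^0(M' - D) > 2$ (arising from the $1$-dimensional locus $B(1,9,3)$), are handled by a separate geometric argument on a general curve of genus $10$. A generic $e \in \operatorname{Ann}(\mu)$ lying outside $U$ then produces a stable $E$ with $\gamma(E) = 4$, completing the converse.
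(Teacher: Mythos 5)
Your reduction is essentially the paper's: the verification that $\Clt=4$, the classification of the possible pairs $(h^0(E),d_E)$ using Theorem \ref{main2}, \cite[Lemma 3.9]{PR} and the gonality values $d_1=6$, $d_2=9$, $d_3=11$, $d_4=12$, the resulting presentation $0\to L\to E\to M\to0$ with $d_L=6$, $h^0(L)=2$, $d_M=12$, $h^0(M)=5$, and the Serre-duality translation of ``all sections of $M$ lift'' into the non-surjectivity of \eqref{eqn11} with $M'=K\otimes L^*$ -- all of this is correct and is exactly how the paper argues.

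The gap is in your stability argument for the converse, which is both overcomplicated and incomplete as written. The estimate $\dim U\le 6$ inside the $15$-dimensional space $H^0(M\otimes M')^*$ is beside the point: the relevant space is the annihilator of the image of \eqref{eqn11}, which may well be a single line, so there is no ``generic $e$'' available; you must show that for \emph{every} effective $D$ of degree $\le 3$ this annihilator is not contained in the annihilator of $H^0(M\otimes M'(-D))$. That is precisely the case you defer (``the remaining special configurations \dots are handled by a separate geometric argument''), and it is not vacuous: on a general curve of genus $10$ we have $B(1,9,3)\ne\emptyset$ (indeed $\beta(1,9,3)=1$), so degree-$3$ divisors failing to impose independent conditions on $H^0(M')$ (and possibly on $H^0(M)$) can occur, and no argument is supplied for them; your evaluation-functional description also needs modifying for non-reduced $D$. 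In fact no genericity is needed, which is why the paper can say stability is ``easy to check'': take \emph{any} nonzero class $e$ in the kernel, so $h^0(E)=7$. If $L_1\subset E$ is a line subbundle with $d_{L_1}\ge 9$, then the composite $L_1\to M$ is nonzero (since $d_L=6$), hence $9\le d_{L_1}\le 12$. If $d_{L_1}=12$, this composite is an isomorphism and the extension splits, contradicting $e\ne0$. Otherwise $E/L_1$ is a line bundle of degree $18-d_{L_1}\le 9$ and $h^0(L_1)+h^0(E/L_1)\ge h^0(E)=7$; but Brill--Noether emptiness on a general curve of genus $10$ gives $h^0(L_1)+h^0(E/L_1)\le 3+3,\,3+2,\,4+2$ for $d_{L_1}=9,10,11$ respectively, all less than $7$. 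So every nonzero class in the kernel of the dual of \eqref{eqn11} yields a stable $E$ with $\gamma(E)=4$, and your entire ``delicate step'' can be replaced by this count.
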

\begin{proof}
We know already that, for a general curve of genus $10$, $\Clt=\Cl=4$ (see \cite[Proposition 3.8]{ln}). Theorem \ref{main2} shows that a stable bundle $E$ of rank $2$ with $h^0(E)=4$ has $\gamma(E)\ge\frac92$. Thus, if $E$ is a stable bundle computing $\Clt$, we must have $h^0(E)=2+s$ with $s\ge3$; moreover, by definition of $\Clt$, $d_E\le2g-2=18$. If $E$ has no line subbundle with $h^0(L)\ge2$, then, by \cite[Lemma 3.9]{PR}, 
$$d_E\ge d_{2s}\ge d_6+2s-6=9+2s$$
and so $\gamma(E)\ge\frac92$ again. On the other hand, if $E$ has a line subbundle $L$ with $h^0(L)\ge3$, then, by stability, $d_E>2d_L\ge2d_2=18$,
a contradiction. It follows that $E$ can be expressed as a non-trivial extension
\begin{equation}\label{eqn13}
0\to L\to E\to M\to0,
\end{equation}
where $h^0(L)=2$ and hence $d_L\ge d_1=6$. We therefore require $d_{s-1}\le d_M\le12$. Since $d_4=12$, this implies $s\le5$. The values $s=3$ and $s=4$ give $d_M\ge9$ and $d_M\ge11$ respectively and a simple calculation gives $\gamma(E)\ge\frac92$ once more. The only remaining possibility is $s=5$, in which case we could have $\gamma(E)=4$ if all the above inequalities are equalities. Writing $M'=K\otimes L^*$, this is equivalent to \eqref{eqn12}. Moreover $h^0(E)=7$, so all the sections of $M$ lift to $E$; it follows that the class of the extension \eqref{eqn13} belongs to the kernel of the canonical map
$$H^1(M^*\otimes L)\to \Hom(H^0(M),H^1(L)).$$
Thus this map is not injective. Dualising, we see that \eqref{eqn11} is not surjective.

Conversely, suppose $M$ and $M'$ exist satisfying \eqref{eqn10} such that  \eqref{eqn11} is not surjective. Then, if we write $L=K\otimes M'^*$, there exists a non-trivial extension \eqref{eqn13} with $h^0(E)=7$. Then $\gamma(E)=4$ and it is easy to check that $E$ is stable. 
\end{proof}

It is an interesting question as to whether line bundles $M$, $M'$ as in Proposition \ref{prop6} can exist. Note that,  when $M=M'$, \eqref{eqn11} is equivalent to asserting that the map $\psi$ of \eqref{eqnpsi} is not surjective. This cannot happen on a general curve of genus $10$ by \cite[Proposition 4.2]{Voi}.

\section{Curves of odd genus}\label{og}
Let $C$ be a Petri curve of odd genus $g\ge3$. Note that, for such a curve, $d_1=\frac{g+3}2$, so
$$\beta(1,d_1,2)=g-2\left(2-\frac{g+3}2+g-1\right)=1.$$
There are therefore infinitely many non-isomorphic line bundles $L$ of degree $d_1$ with $h^0(L)=2$.

For $g=3$, we have $d_1=3$, $d_4=7$, so we are in case (a) of Remark \ref{r8} and
\begin{itemize}
\item $\widetilde{B}(2,d,4)\ne\emptyset$ if and only if $d\ge6$.
\item $B(2,d,4)\ne\emptyset$ if and only if $d\ge7$.
\end{itemize}
For $g=5,7$ and $9$, we are in case (b) with $d_4=g+3=2d_1$. 

\begin{proposition}\label{prop7} Let $C$ be a Petri curve of genus $g=5,7$ or $9$. Then
\begin{itemize}
\item[(i)] $\widetilde{B}(2,d,4)\ne\emptyset$ if and only if $d\ge g+3$;
\item[(ii)] $B(2,d,4)\ne\emptyset$ if $d\ge g+4$;
\item[(iii)] if $g=5$, $B(2,d,4)\ne\emptyset$ if and only if $d\ge 8$; moreover, if $E\in B(2,8,4)$, then $\det E\cong K$ and $\dim B(2,8,4)=2$.
\end{itemize}
\end{proposition}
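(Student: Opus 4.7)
The plan is to dispatch parts (i) and (ii) quickly from the earlier propositions, then concentrate on (iii), where the core is a construction-plus-dimension-count argument at the minimum degree $d = 8$.

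For (i) and (ii), the key fact is that for Petri odd $g$ one has $d_1 = (g+3)/2$ and $\beta(1, d_1, 2) = 2d_1 - g - 2 = 1 > 0$, so there are infinitely many non-isomorphic line bundles of degree $d_1$ with $h^0 = 2$. Emptiness of $\widetilde{B}(2, d, 4)$ for $d < g+3 = d_4 = 2d_1$ follows from Proposition \ref{prop2}(iii) combined with Proposition \ref{prop1}(iii); non-emptiness of $\widetilde{B}(2, g+3, 4)$ is Proposition \ref{prop1}(i); and Proposition \ref{prop1}(ii), applied with two such non-isomorphic line bundles, gives $B(2, d, 4) \neq \emptyset$ for $d \geq g+4 = 2d_1+1$, yielding (i) for $d > g+3$ and all of (ii).

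For (iii), the first observation is that any $E \in B(2, 8, 4)$ has $\det E \cong K$: by Lemma \ref{lem1}, $h^0(\det E) = 5 = g$ and $\deg(\det E) = 8 = 2g-2$, which by Clifford together with Riemann--Roch forces $\det E = K$; in particular $B(1, 8, 5) = \{K\}$. For the construction, I would apply the proof of Theorem \ref{main}(ii) with $M = K$. Since $C$ is Petri with $d_1 = 4$, $C$ is neither hyperelliptic nor trigonal, so by Noether's theorem the restriction $H^0(\PP^4, \cO(2)) \to H^0(K^2)$ is surjective and $h^0(\cI_C(2)) = 15 - 12 = 3$: the curve $\phi_K(C) \subset \PP^4$ lies on a $\PP^2$ of quadrics. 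For any line $\ell$ in $\PP^4$ lying on some quadric of this $\PP^2$ and missing $\phi_K(C)$, the construction produces a semistable bundle $E$ with $\det E = K$ and $h^0(E) \geq 4$.

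The main obstacle is ensuring stability, which I would handle via the bijective correspondence $(E, V) \leftrightarrow (K, \ell)$ underlying the proof of Theorem \ref{main}(ii) (cf.\ Remark \ref{r5}), applied on the full semistable locus $\widetilde{B}(2, 8, 4)$. The $(K, \ell)$ side has dimension $5$: the $\ell$'s lying on some quadric in $\cI_C(2)$ form a codimension-one determinantal subvariety of $\Gr(2, 5)$ (where the restriction $\cI_C(2) \to H^0(\ell, \cO_\ell(2))$, between $3$-dimensional spaces, drops rank), and the open condition $\ell \cap \phi_K(C) = \emptyset$ preserves the dimension. Since the fibre of $(E, V) \mapsto E$ is $\Gr(3, 4) = \PP^3$ (using that $E$ is generated with $h^0(E) = 4$), the image $\{E\}$ has dimension $5 - 3 = 2$. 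The strictly semistable $E$'s in this image have Jordan--H\"older factors $L, L' = KL^{-1}$ with $L \in B(1, 4, 2)$, a $1$-dimensional family on a Petri curve of genus $5$; they therefore form a $1$-dimensional closed subset, and the open complement is $B(2, 8, 4)$, which is thus $2$-dimensional and non-empty. Combining this with (i) gives emptiness of $B(2, d, 4)$ for $d < 8$, and Proposition \ref{prop1}(iv) extends non-emptiness to all $d \geq 8$.
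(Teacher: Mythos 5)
Your treatment of (i) and (ii) coincides with the paper's: both reduce to Propositions \ref{prop1} and \ref{prop2} using $d_4=2d_1=g+3$ and the fact that $\beta(1,d_1,2)=1$ gives infinitely many pencils of degree $d_1$. For (iii) you genuinely diverge: after the common observation that Lemma \ref{lem1} forces $h^0(\det E)=5$, hence $\det E\cong K$, the paper simply quotes Bertram--Feinberg \cite{BF} for non-emptiness and $\dim B(2,8,4)=2$, whereas you reprove this self-contained, running the construction of Theorem \ref{main}(ii) and the correspondence of Remark \ref{r5} with $M=K$ (where the net $h^0(\cI_C(2))=3$ replaces the single quadric) and doing a dimension count on the $5$-dimensional family of lines lying on quadrics through the canonical curve, with $\Gr(3,4)$-fibres over each bundle. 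This buys independence from \cite{BF} and fits the genus-$5$ case into the same quadric--line picture used elsewhere in the paper; the cost is that you must handle by hand the degenerate boundary case $d_4=2d_1$, which Theorem \ref{main}(ii) and Remark \ref{r5} deliberately avoid (here $K\cong L\otimes L'$ with $h^0(L)=h^0(L')=2$ always, so the exclusions in Remark \ref{r5} would empty both sides, and the construction really can produce strictly semistable bundles).

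That is where your argument has a soft spot. You dismiss the strictly semistable output by saying the Jordan--H\"older factors $(L,KL^{-1})$, $L\in B(1,4,2)$, form a $1$-dimensional family, ``therefore'' the strictly semistable $E$'s in the image form a $1$-dimensional set. This conflates S-equivalence classes with isomorphism classes: for fixed $(L,L')$ the bundles $E$ with those factors form a positive-dimensional family $\PP(\Ext^1(L',L))$ (here $\dim\Ext^1(L',L)=4$), so you must bound how many of them satisfy $h^0(E)\ge4$. The claim is true but needs an argument: the condition that all sections of $L'$ lift puts the extension class in the kernel of $H^1(L\otimes L'^{-1})\to\Hom(H^0(L'),H^1(L))$, which is dual to the cokernel of the multiplication map $H^0(L')\otimes H^0(KL^{-1})\to H^0(KL^{-1}\otimes L')$; with $L'=KL^{-1}$ the base-point-free pencil trick shows this cokernel is $1$-dimensional, so for each $L$ only finitely many isomorphism classes of strictly semistable $E$ have $h^0(E)\ge4$ (and the case $L\cong L'$, i.e.\ a theta-characteristic with $h^0=2$, cannot occur on a Petri curve, again by the pencil trick versus Petri injectivity). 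With that inserted, your count (image of dimension $\ge2$, strictly semistable part of dimension $\le1$, upper bound $\dim B(2,8,4)\le 5-3=2$ via the injection $(E,V)\mapsto\ell$) is sound and recovers the statement without \cite{BF}.
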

\begin{proof} (i) and (ii). Since $d_4=2d_1$ and there are infinitely many non-isomorphic line bundles $L$ of degree $d_1$ with $h^0(L)=2$, it follows from Propositions \ref{prop1} and \ref{prop2} that (i) and (ii) hold.

(iii) Suppose now $g=5$ and $E\in B(2,8,4)$; then by Lemma \ref{lem1}, $h^0(\det E)=5$. It follows at once that $\det E\cong K$. The other parts of (iii) now follow from \cite{BF}.
\end{proof}

\begin{remark}\label{r9}\begin{em}
For $g=5$, the strictly semistable locus $\widetilde{B}(2,8,4)\setminus B(2,8,4)$ is isomorphic to $S^2B(1,4,2)$ and therefore has dimension $2$. Thus $\widetilde{B}(2,8,4)$ has two components, each of dimension $2$, which intersect in a subvariety of dimension $1$ (the points of this subvariety are given by bundles of the form $L\oplus(K\otimes L^*)$ with $L\in B(1,4,2)$).   On the other hand, $\beta(2,8,4)=1$.
\end{em}\end{remark}

\begin{remark}\label{r10}\begin{em}
For $g=7$ and $g=9$, the argument from \cite{BF} does not apply. For any $M\in B(1,g+3,5)$, we have $h^0(M^2)=g+7$. For $g=7$, it follows that the map $\psi$ of \eqref{eqnpsi} cannot be injective, so $\phi_M(C)$ is contained in a quadric. However, since $\beta(1,10,5)=2$, it is possible that, for all $M\in B(1,10,5)$, $M\cong L\otimes L'$ for some $L, L'\in B(1,5,2)$ and the method of Theorem \ref{main} may yield only strictly semistable bundles. For $g=9$, we have $\beta(1,12,5)=4$, so there exist $M\in B(1,12,5)$ which are not of the form $L\otimes L'$. However, we now have $h^0(M^2)=16$, so $\phi_M(C)$ may not be contained in a quadric and again the argument of Theorem \ref{main} and Remark \ref{r2} may not yield any bundles in $B(2,12,4)$.  
\end{em}\end{remark}

\begin{remark}\label{r11}\begin{em}
If $g\ge11$, we are in case (c) and $d_4<2d_1$. It follows that any $E\in B(2,d_4,4)$ has $\gamma(E)<\Cl$. Moreover, by \cite[Theorem 5.2]{ln}, $\gamma(E)=\Clt$. In particular, in genus $11$, we have $d_1=7$ and $d_4=13$. According to Theorem \ref{main}, $B(2,13,4)\ne\emptyset$ if and only if there is a non-degenerate morphism $C\to{\mathbb P}^4$ of degree $13$ whose image is contained in a quadric.  It is an interesting question to determine whether this is true for all Petri curves of genus $11$, for some but not all Petri curves or never (for further developments, see section \ref{ps}).
\end{em}\end{remark}

\section{coherent systems}\label{cs}
We recall that a {\em coherent system} of type $(n,d,k)$ on $C$ is a pair $(E,V)$, where $E$ is a vector bundle of rank $n$ and degree $d$ and $V$ is a $k$-dimensional subspace of $H^0(E)$. For any real number $\alpha>0$, the coherent system is {\em  $\alpha$-stable} if, for every proper subsystem $(F,W)$, with $F$ of rank $r$,
$$\frac{d_F}{r}+\alpha\frac{\dim W}{r}<\frac{d}n+\alpha\frac{k}n.$$
There exists a coarse moduli space $G(\alpha;n,d,k)$ for the $\alpha$-stable coherent systems of type $(n,d,k)$ (see \cite{BGMN} for a discussion of the general theory of coherent systems on curves).

A possible way of constructing bundles in $B(2,d,4)$ is to construct coherent systems of type $(2,d,4)$ and then use the methods of \cite{BGMN} to show that the underlying bundles are stable. It turns out that this doesn't help with the problem considered earlier, but one still obtains interesting results by interpreting the problem in terms of coherent systems.

We start by recalling a definition from \cite{BP}:
$$U(n,d,k):=\{(E,V)|(E,V)\in G(\alpha;n,d,k) \mbox{ for all }\alpha>0\mbox{ and }E\mbox{ is stable}\}.$$
\begin{proposition}\label{prop5}
Suppose $d\le2d_1+1$ and $(E,V)$ is a coherent system of type $(2,d,4)$. The following are equivalent:
\begin{itemize}
\item[(i)] $(E,V)\in G(\alpha;2,d,4)$ for some $\alpha>0$;
\item[(ii)] $E$ is stable;
\item[(iii)] $(E,V)\in U(2,d,4)$.
\end{itemize}
\end{proposition}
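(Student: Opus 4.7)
The plan is to prove the equivalence via the chain (iii)$\Rightarrow$(i)$\Rightarrow$(ii)$\Rightarrow$(iii). The implication (iii)$\Rightarrow$(i) is immediate from the definition of $U(2,d,4)$. The real content lies in the other two implications, and in both directions the hypothesis $d\le 2d_1+1$ will be the decisive numerical input, used together with the basic fact $d_2\ge d_1+1$.

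For (i)$\Rightarrow$(ii), I would argue by contradiction. Suppose $(E,V)$ is $\alpha$-stable for some $\alpha>0$ but $E$ is not stable; then there exists a line subbundle $L\subset E$ with $d_L\ge d/2$. Setting $M=E/L$ one has $d_M=d-d_L\le d/2\le d_1+\frac{1}{2}$, so $d_M\le d_1<d_2$ and hence $h^0(M)\le 2$. The image of $V$ under the map $H^0(E)\to H^0(M)$ has dimension at most $2$, so $w:=\dim(V\cap H^0(L))\ge 4-2=2$. The subsystem $(L,V\cap H^0(L))$ then satisfies $d_L+\alpha w\ge d/2+2\alpha$, contradicting the $\alpha$-stability of $(E,V)$.

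For (ii)$\Rightarrow$(iii), assume $E$ is stable and let $(F,W)$ be any proper subsystem with $F$ of rank $r$ and $w=\dim W$. When $r=2$ one has $d_F\le d-1$ and $w\le 4$, so $\frac{d_F}{2}+\alpha\frac{w}{2}\le\frac{d-1}{2}+2\alpha<\frac{d}{2}+2\alpha$ for every $\alpha>0$, which handles this case trivially. The substantive case is $r=1$: writing $L=F$, stability of $E$ gives $d_L<d/2$, and the hypothesis $d\le 2d_1+1$ controls $w$ from above. When $d$ is even, $d\le 2d_1$ forces $d_L\le d_1-1$, hence $h^0(L)\le 1$; when $d$ is odd, $d_L\le (d-1)/2\le d_1$, hence $h^0(L)\le 2$. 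In either case $w\le 2$, so the required inequality $d_L-d/2<\alpha(2-w)$ has strictly negative left side and nonnegative right side and holds for every $\alpha>0$.

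The main obstacle, such as it is, is the parallel control of $w$ in the two directions: a slope-destabilizing line subbundle must carry at least two sections of $V$ (so that $\alpha$-stability genuinely fails), while a line subbundle compatible with slope stability of $E$ can carry at most two sections (so that no value of $\alpha$ destabilizes the pair). Both facts rest on the single numerical relation $d\le 2d_1+1$ combined with $d_2\ge d_1+1$. Once this bookkeeping is in place the rank-$2$ case is automatic and the three conditions coalesce.
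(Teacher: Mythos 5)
Your proof is correct and takes essentially the same route as the paper: in (i)$\Rightarrow$(ii) a destabilizing line subbundle $L$ has $d_{E/L}\le d_1$, hence $h^0(E/L)\le 2$ and $\dim(V\cap H^0(L))\ge 2$, contradicting $\alpha$-stability, while in (ii)$\Rightarrow$(iii) stability forces every line subbundle (or rank-one subsheaf) to satisfy $d_L<d/2$ and $h^0(L)\le 2$, so no subsystem violates $\alpha$-stability for any $\alpha>0$. The only small slip is in your rank-two case: a proper subsystem of the form $(E,W)$ with $W\subsetneq V$ has $d_F=d$ rather than $d_F\le d-1$, but then $w\le 3$ and the required strict inequality still holds, so nothing is lost.
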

\begin{proof} (i)$\Rightarrow$(ii). If $(E,V)$ is $\alpha$-stable and $L$ is a subbundle with $d_L\ge\frac{d}2$, then $d_{E/L}\le d_1$, so $h^0(E/L)\le2$ and hence $\dim(H^0(L)\cap V)\ge2$. This contradicts the $\alpha$-stability of $(E,V)$, proving that $E$ is stable. 

(ii)$\Rightarrow$(iii). Every line subbundle $L$ of $E$ has degree $\le d_1$ and hence $h^0(L)\le2$. It follows that $(E,V)$ is $\alpha$-stable for all $\alpha>0$.

(iii)$\Rightarrow$(i) is obvious.
\end{proof}
\begin{remark}\begin{em}\label{r7}
One can show easily that, under the hypotheses of Proposition \ref{prop5}, there are no actual critical values in the sense of \cite[Definition 2.4]{BGMN}.
\end{em}\end{remark}

\begin{remark}\label{rlet}\begin{em}
On a general curve of genus $g\ge3$, or more generally whenever $d_1$ computes $\Cl$, Proposition \ref{prop5} is proved for $d<2d_1$ in \cite[Theorem 1.1]{BO}.
\end{em}\end{remark}

To handle coherent systems whose degree is greater than $2d_1+1$, the following lemma will be useful.

\begin{lemma}\label{lem+1}
Let $(L_1,V_1)$, $(L_2,V_2)$ be coherent systems of type $(1,d_L,2)$ and
$$0\longrightarrow L_1\oplus L_2\longrightarrow E\longrightarrow\tau\longrightarrow0$$
an extension with $\tau$ a torsion sheaf and $E$ a stable bundle. Then 
$$(E,V_1\oplus V_2)\in U(2,d_E,4).$$
\end{lemma}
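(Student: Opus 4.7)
The plan is to verify the $\alpha$-stability of $(E,V)$ for every $\alpha>0$, where $V:=V_1\oplus V_2\subset H^0(L_1\oplus L_2)\hookrightarrow H^0(E)$; stability of $E$ is given by hypothesis. One may test $\alpha$-stability against saturated subsystems $(F,W)$ only, so either $F=E$ or $F$ is a line subbundle of $E$. The rank-$2$ case is essentially trivial: if $F=E$, properness forces $\dim W<4$ and the inequality is automatic; if $F\subsetneq E$ then $d_F\le d_E-1$, which makes the required strict inequality hold for all $\alpha>0$ and all admissible $\dim W$.

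The substantive case is $F=L$ a line subbundle. Stability of $E$ gives $d_L<d_E/2$, so $d_L+\alpha\dim W<d_E/2+2\alpha$ holds for every $\alpha>0$ as soon as $\dim W\le 2$. Thus the whole lemma reduces to the uniform bound
\[
\dim\bigl(V\cap H^0(L)\bigr)\le 2\quad\text{for every line subbundle } L\subset E.
\]

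To prove this bound, observe that $V\subset H^0(L_1\oplus L_2)$ inside $H^0(E)$, so any section in $V\cap H^0(L)$ actually lies in $H^0(L')$, where $L':=L\cap(L_1\oplus L_2)$. Since $E/(L_1\oplus L_2)=\tau$ is torsion and $L$ is a line bundle, $L'$ is a nonzero rank-$1$ subsheaf of $L_1\oplus L_2$. The projections $\alpha_i:L'\to L_i$ are morphisms of line bundles, and at least one of them, say $\alpha_1$, is nonzero and hence injective on global sections. Any section of $L'$ inside $H^0(L_1\oplus L_2)$ has the form $(\alpha_1(t),\alpha_2(t))$ for some $t\in H^0(L')$, and to lie in $V$ such a section must in particular satisfy $\alpha_1(t)\in V_1$. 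Injectivity of $\alpha_1$ on global sections then makes $t\mapsto\alpha_1(t)$ embed $V\cap H^0(L')$ into $V_1$, giving the bound $\le\dim V_1=2$.

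The main obstacle is precisely this dimension bound; the simplification that avoids a case analysis on the relative position of $L$ and $L_1\oplus L_2$ is the observation that a single nonzero projection $\alpha_i:L'\to L_i$ already embeds $V\cap H^0(L')$ into one of the $V_i$, and such an $i$ must exist because $L'$ is nonzero.
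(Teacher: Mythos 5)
Your proposal is correct and follows essentially the same route as the paper: reduce to rank-one subsystems, use stability of $E$ to bound the degree term, and bound $\dim W$ by intersecting with $L_1\oplus L_2$ so that $W$ consists of sections of a rank-one subsheaf of $L_1\oplus L_2$. Your projection argument onto one of the $L_i$ simply makes explicit the step the paper leaves implicit in concluding $\dim W\le 2$.
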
 
\begin{proof} Suppose $(F,W)$ is a coherent subsystem of $(E,V_1\oplus V_2)$ with $F$ of rank $1$. Then $W\subset V_1\oplus V_2$, so $(F\cap(L_1\oplus L_2), W)$ is a coherent subsystem of $(L_1\oplus L_2,V_1\oplus V_2)$; it follows that $\dim W\le2$. On the other hand, by stability of $E$, $d_F<\frac{d_E}2$. So $(E,V_1\oplus V_2)$ is $\alpha$-stable for all $\alpha>0$.
\end{proof}

\begin{remark}\label{r15}\begin{em}
The stable bundles $E$ constructed in the proof of Proposition \ref{prop1}(ii) all satisfy the conditions of Lemma \ref{lem+1} and therefore give rise to coherent systems $(E,V)\in U(2,d_E,4)$.
\end{em}\end{remark}

We can now interpret our results in terms of coherent systems, beginning with the case $g=2$.

 \begin{theorem}\label{cs1}
 Let $C$ be a curve of genus $2$. Then
 \begin{itemize}
 \item[(i)] if $d\le5$, then $G(\alpha;2,d,4)=\emptyset$ for all $\alpha>0$;
 \item[(ii)] if $d\ge6$, then $U(2,d,4)\ne\emptyset$.
 \end{itemize}
\end{theorem}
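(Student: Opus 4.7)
The plan is to prove (i) and (ii) separately, using Proposition \ref{prop5}, Proposition \ref{prop4}, Proposition \ref{prop1}(ii), and Lemma \ref{lem+1}, noting throughout that for a curve of genus $2$ the gonality is $d_1 = 2$.

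For (i), the range $d \le 5$ is exactly $d \le 2d_1 + 1$, so Proposition \ref{prop5} applies: any $\alpha$-stable coherent system of type $(2,d,4)$ with $\alpha>0$ must have a stable underlying bundle, placing it in $B(2,d,4)$. But Proposition \ref{prop4} gives $B(2,d,4) = \emptyset$ for $d \le 5$, so $G(\alpha; 2, d, 4)$ is empty for every $\alpha > 0$.

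For (ii), I would split into the cases $d \ge 7$ and $d = 6$. For $d \ge 7$, since $K$ is the unique line bundle of degree $2$ with $h^0 = 2$, choose distinct points $p, q \in C$ and set $L_1 = K(p)$, $L_2 = K(q)$; these have degree $3$ and $h^0(L_i) = 2$ (as $\deg L_i > 2g-2$). A general extension $0 \to L_1 \oplus L_2 \to E \to \tau \to 0$ with $\tau$ a torsion sheaf of length $d - 6 \ge 1$ produces a stable bundle $E$ of degree $d$ by the construction in the proof of Proposition \ref{prop1}(ii), and Lemma \ref{lem+1} then places $(E, H^0(L_1) \oplus H^0(L_2))$ in $U(2, d, 4)$.

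For $d = 6$, the plan is to take a stable $E \in B(2, 6, 4)$ produced by the argument in Proposition \ref{prop4} (a general non-trivial extension $0 \to L \to E \to M \to 0$ with $L \not\cong K$ of degree $2$ and $M$ of degree $4$, giving $h^0(E) = 4$), set $V = H^0(E)$, and verify $\alpha$-stability of $(E, V)$ by hand. Every line subbundle $L' \subset E$ has $d_{L'} \le 2$ by stability of $E$ (slope $3$), hence $h^0(L') \le 2$ by Clifford's theorem, so $\dim(H^0(L') \cap V) \le 2$. A short case check on $\dim W \in \{0,1,2\}$ then yields $d_{L'} + \alpha \dim W < 3 + 2\alpha$ for every $\alpha > 0$, placing $(E,V)$ in $U(2,6,4)$. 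The only mildly delicate step is this verification at $d = 6$, since Proposition \ref{prop5} is unavailable there (its hypothesis $d \le 2d_1 + 1 = 5$ just fails) and one must instead exploit the slope bound for line subbundles of a stable rank-$2$ bundle together with Clifford's theorem.
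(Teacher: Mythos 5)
Your proposal is correct and follows essentially the same route as the paper: part (i) via Propositions \ref{prop4} and \ref{prop5}, and part (ii) by the elementary transformation construction of Proposition \ref{prop1}(ii) together with Lemma \ref{lem+1} (this is exactly Remark \ref{r15}) for $d\ge7$, and by the extension \eqref{eqng2} with a direct check that line subbundles have degree $\le2$, hence $h^0\le2$, for $d=6$. You merely spell out details (the choice $L_1=K(p)$, $L_2=K(q)$ and the explicit $\alpha$-stability inequality) that the paper leaves implicit.
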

\begin{proof} (i) follows at once from Propositions \ref{prop4} and \ref{prop5}.

(ii) For $d\ge7$, this follows from Proposition \ref{prop1}(ii) and Remark \ref{r15}. For $d=6$,  $(E,V)$ can be constructed from the sequence \eqref{eqng2} with $E$ stable. Any line subbundle $L'$ of $E$ has degree $d_{L'}\le2$ and therefore $h^0(L')\le2$; hence $(E,V)\in U(2,6,4)$.
\end{proof}

\begin{remark}\label{r16}\begin{em}
For $d\le3$ and $d\ge6$, this is proved in \cite{BP}; for $d=4$ and $d=5$, the result in \cite{BP} is slightly weaker than ours.
\end{em}\end{remark}

\begin{theorem}\label{cs2}
For any curve $C$,
\begin{itemize}
\item[(i)] $U(2,d,4)\ne\emptyset$ if $d\ge2d_1+3$;
\item[(ii)] if there exist two non-isomorphic line bundles $L_1$, $L_2$ of degree $d_1$ with $h^0(L_i)=2$, then $U(2,d,4)\ne\emptyset$ for $d\ge 2d_1+1$;
\item[(iii)] if $e\le2d_1$ and $B(2,e,4)\ne\emptyset$, then $U(2,d,4)\ne\emptyset$ for $d\ge e$;
\item[(iv)] if $d<\min\{2d_1,d_4\}$, then $G(\alpha;2,d,4)=\emptyset$ for all $\alpha>0$.
\end{itemize}
\end{theorem}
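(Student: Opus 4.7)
The proof of Theorem \ref{cs2} splits naturally into three essentially independent arguments: (iv) by itself; (i) and (ii) together; and (iii). The first two stay within the range $d \le 2d_1 + 1$ where Proposition \ref{prop5} is available, while (iii) forces us to deal with degrees potentially much larger than $2d_1 + 1$ and therefore requires a direct verification of $\alpha$-stability.

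For (iv), suppose $(E, V) \in G(\alpha; 2, d, 4)$ with $d < \min\{2d_1, d_4\}$. Since $d < 2d_1 \le 2d_1 + 1$, the implication (i)$\Rightarrow$(ii) of Proposition \ref{prop5} gives that $E$ is stable, so $E \in B(2, d, 4)$; Proposition \ref{prop2}(iii) then forces $d \ge d_4$, contradicting $d < d_4$. For (i) and (ii), the stable $E$ produced in the proof of Proposition \ref{prop1}(ii) fits into an exact sequence $0 \to L_1 \oplus L_2 \to E \to \tau \to 0$ with $L_1, L_2$ of the same degree and $h^0(L_i) \ge 2$. Choosing any $2$-dimensional $V_i \subset H^0(L_i)$ and setting $V := V_1 \oplus V_2 \hookrightarrow H^0(E)$ makes each $(L_i, V_i)$ a coherent system of type $(1, d_{L_i}, 2)$, and Lemma \ref{lem+1} gives $(E, V) \in U(2, d, 4)$ directly.

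Part (iii) is the substantive one. For $d = e$ I would just apply Proposition \ref{prop5}: any $4$-dimensional $V \subset H^0(E)$ together with stability of $E$ yields $(E, V) \in U(2, e, 4)$. For $d > e$, I would mimic the construction of Proposition \ref{prop1}(iv): depending on the parity of $d - e$, perform at most one torsion extension $0 \to E \to F' \to \tau \to 0$ of length one and then tensor by an appropriate effective line bundle $M$, producing a stable bundle $F := F' \otimes M$ of degree $d$. The subspace $V \subset H^0(E)$ injects into $H^0(F')$ via the long exact cohomology sequence and then into $H^0(F)$ by multiplication by a chosen nonzero section of $M$, yielding a coherent system $(F, V)$ of type $(2, d, 4)$.

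The core step is verifying that $(F, V) \in U(2, d, 4)$. Given any line subbundle $L \subset F$, any section $v \in V \cap H^0(L)$ can be traced back through the twist by $M$ and through the torsion extension to a section of $E$ that factors through a rank-$1$ subsheaf of $E$; the saturation $L''$ of this subsheaf inside $E$ is a line subbundle of $E$ of degree strictly less than $e/2 \le d_1$ by stability of $E$ and the hypothesis $e \le 2d_1$, so $h^0(L'') \le 1$. Therefore $\dim(V \cap H^0(L)) \le 1$, and the $\alpha$-stability condition reduces to $d_L + \alpha < d/2 + 2\alpha$, equivalent to $d_L < d/2 + \alpha$, which follows from stability of $F$ for every $\alpha > 0$. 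The main obstacle is precisely this bookkeeping of sections across the two-step construction; once the inclusion $V \cap H^0(L) \hookrightarrow H^0(L'')$ is set up, the bound $h^0(L'') \le 1$ coming from the degree constraint on $E$ closes the argument.
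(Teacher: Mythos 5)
Your proposal is correct, and parts (i), (ii) and (iv) coincide with the paper's proof: (i) and (ii) are Proposition \ref{prop1}(ii) combined with Lemma \ref{lem+1} (this is exactly Remark \ref{r15}), and (iv) is the combination of Propositions \ref{prop5} and \ref{prop2}(iii).

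The only real divergence is in (iii). The paper reduces to the two degrees $e$ and $e+1$: non-emptiness of $B(2,e,4)$ and $B(2,e+1,4)$ comes from Proposition \ref{prop1}(iv), and since $e+1\le 2d_1+1$ Proposition \ref{prop5} upgrades both to non-emptiness of $U(2,e,4)$ and $U(2,e+1,4)$; the cases $d\ge e+2$ are then dispatched by the one-line assertion that tensoring by effective line bundles preserves the conclusion. You instead handle all $d>e$ by one construction (a length-one torsion extension when $d-e$ is odd, then a twist) and verify $\alpha$-stability for all $\alpha$ directly, by pulling a section of $V$ lying in a line subbundle $L\subset F$ back to a section of $E$ lying in a line subbundle $L''$ with $d_{L''}<e/2\le d_1$, hence $h^0(L'')\le 1$ and $\dim(V\cap H^0(L))\le 1$. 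This section-tracing is precisely the justification that the paper's ``tensoring by effective line bundles'' step leaves implicit (one needs to know that the twisted coherent system stays in $U$), so your version is slightly longer but more self-contained, while the paper's route through Proposition \ref{prop5} at degree $e+1$ avoids any bookkeeping at that degree; both arguments are sound, and your bound $\dim(V\cap H^0(L))\le 1$ is in fact stronger than the $\le 2$ needed for $\alpha$-stability for all $\alpha>0$.
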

\begin{proof} (i) and (ii) follow from Proposition \ref{prop1}(ii) and Remark \ref{r15}.

(iii) We need only prove that $U(2,e,4)$ and $U(2,e+1,4)$ are non-empty, since the results for $d\ge e+2$ can then be obtained by tensoring by effective line bundles. For these cases, non-emptiness follows from Propositions \ref{prop1}(iv) and \ref{prop5}.

(iv) follows from Propositions \ref{prop2}(iii) and \ref{prop5}.
\end{proof}

\begin{remark}\label{r17}\begin{em}
Theorem \ref{cs2}(ii) was proved for a general curve by Teixidor using degeneration methods \cite{T2}.
\end{em}\end{remark}

\begin{theorem}\label{cs3}
Suppose $d_4\le2d_1$. Then
\begin{itemize}
\item[(i)] if $G(\alpha;2,d_4,4)\ne\emptyset$ for some $\alpha>0$, there exists a line bundle $M\in B(1,d_4,5)$ such that $\phi_M(C)$ is contained in a quadric;
\item[(ii)] if $d_4<2d_1$ and there exists $M\in B(1,d_4,5)$ such that $\phi_M(C)$ is contained in a quadric, then $U(2,d_4,4)\ne\emptyset$. 
\end{itemize}
\end{theorem}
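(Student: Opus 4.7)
The plan is to reduce Theorem \ref{cs3} directly to Theorem \ref{main} via Proposition \ref{prop5}, exploiting the fact that the hypothesis $d_4\le 2d_1$ places us in the regime where $\alpha$-stability, stability of the underlying bundle, and membership in $U(2,d_4,4)$ all coincide.

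For part (i), I would start with a coherent system $(E,V)\in G(\alpha;2,d_4,4)$ for some $\alpha>0$. Since $d_4\le 2d_1\le 2d_1+1$, the equivalence (i)$\Leftrightarrow$(ii) in Proposition \ref{prop5} applies and gives that $E$ is stable, so $E\in B(2,d_4,4)$. Lemma \ref{lem1} then forces $h^0(E)=4$, which means $V=H^0(E)$ (so in particular nothing is lost by replacing $V$ with the full space of sections). Now Theorem \ref{main}(i) immediately produces the required line bundle $M\in B(1,d_4,5)$ with $\phi_M(C)$ contained in a quadric.

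For part (ii), I would invoke Theorem \ref{main}(ii) with the stronger hypothesis $d_4<2d_1$ to obtain a stable bundle $E\in B(2,d_4,4)$. Again Lemma \ref{lem1} gives $h^0(E)=4$, so setting $V=H^0(E)$ yields a coherent system $(E,V)$ of type $(2,d_4,4)$. Since $d_4<2d_1\le 2d_1+1$, the implication (ii)$\Rightarrow$(iii) of Proposition \ref{prop5} shows that $(E,V)\in U(2,d_4,4)$, completing the proof.

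I do not expect any serious obstacle here: all the geometric and cohomological content has been isolated in Theorem \ref{main} and Lemma \ref{lem1}, and the coherent system translation is handled cleanly by Proposition \ref{prop5}. The only point requiring mild care is verifying that the hypothesis $d\le 2d_1+1$ of Proposition \ref{prop5} is satisfied in each direction (it is, since $d_4\le 2d_1$ in (i) and $d_4<2d_1$ in (ii)), and noting that the 4-dimensional subspace $V$ is automatically $H^0(E)$ by Lemma \ref{lem1}, so one does not need to make any subtle choice of subspace.
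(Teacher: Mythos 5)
Your argument is correct and is essentially the paper's proof: the paper also deduces Theorem \ref{cs3} immediately from Theorem \ref{main} together with Proposition \ref{prop5}, and your extra use of Lemma \ref{lem1} to note $V=H^0(E)$ is just a harmless elaboration of that same reduction.
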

\begin{proof}
This follows at once from Theorem \ref{main} and Proposition \ref{prop5}.
\end{proof}

\begin{theorem}\label{cs4} Let $C$ be a general curve of genus $10$. Then
\begin{itemize}
\item[(i)]  $U(2,d,4)\ne\emptyset$ if $d\ge13$; 
\item[(ii)]  $G(\alpha; 2,12,4)=\emptyset$ for all $\alpha>0$.
\end{itemize}
However
\begin{itemize}
\item[(iii)] there exist Petri curves of genus $10$ for which $U(2,12,4)\ne\emptyset$.
\end{itemize}
\end{theorem}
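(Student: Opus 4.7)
The plan is to reduce all three assertions to results already established, using Proposition \ref{prop5} as the bridge between $\alpha$-stability of coherent systems and stability of the underlying bundle. Since $d_1=6$ on a general curve of genus $10$ (and on every Petri curve of genus $10$), both $d=12$ and $d=13$ satisfy the hypothesis $d\le 2d_1+1=13$ of that proposition. Thus for coherent systems of type $(2,d,4)$ with $d\in\{12,13\}$, the three conditions ``$\alpha$-stable for some $\alpha>0$'', ``underlying bundle stable'' and ``lies in $U(2,d,4)$'' coincide, and so each part of the theorem collapses to a Brill--Noether statement I can quote.

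For part (i), I would simply invoke Theorem \ref{cs2}(ii). The proof of Theorem \ref{main2}(i) records, via Castelnuovo's formula, that a general curve of genus $10$ carries $42$ isomorphism classes of line bundles of degree $d_1=6$ with $h^0=2$; in particular at least two non-isomorphic such bundles exist, which is the hypothesis of Theorem \ref{cs2}(ii), yielding $U(2,d,4)\ne\emptyset$ for every $d\ge 13$.

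For part (ii), I would argue by contradiction. If $(E,V)\in G(\alpha;2,12,4)$ for some $\alpha>0$, then Proposition \ref{prop5} applies (because $12\le 2d_1+1=13$) and forces $E$ to be stable, so $E\in B(2,12,4)$; but Theorem \ref{main2}(i) states precisely that $B(2,12,4)=\emptyset$ on a general curve of genus $10$, giving the required contradiction.

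For part (iii), I would pick a Petri curve of genus $10$ produced by Theorem \ref{main2}(ii) and any $E\in B(2,12,4)$ on it. Since $d_4=12=2d_1$, Lemma \ref{lem1} applies and gives $h^0(E)=4$, so $V:=H^0(E)$ defines a coherent system of type $(2,12,4)$ with stable underlying bundle; a final application of Proposition \ref{prop5} places $(E,V)$ in $U(2,12,4)$. Nothing in the argument is genuinely difficult at this stage: the real work, namely Voisin's surjectivity and non-surjectivity results for the map $\psi$ of \eqref{eqnpsi} in the generic and K3 cases respectively, was done in the proof of Theorem \ref{main2}, and the present theorem is essentially its translation into the coherent-systems language.
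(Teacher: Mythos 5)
Your proposal is correct and follows essentially the same route as the paper, which proves (i) as a special case of Theorem \ref{cs2}(ii) and deduces (ii) and (iii) from Theorem \ref{main2} combined with Proposition \ref{prop5}; you have merely spelled out the same citations in more detail (the appeal to Lemma \ref{lem1} in (iii) is harmless but not needed, since any $4$-dimensional subspace of $H^0(E)$ would serve).
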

\begin{proof}
(i) is a special case of Theorem \ref{cs2}(ii). 

(ii) and (iii) follow from Theorem \ref{main2} and Proposition \ref{prop5}.
\end{proof}

\section{Postscript}\label{ps}

In this postscript, added in March 2011, we comment on some remarkable developments which have taken place since the body of the paper was completed in August 2010.

These developments concern the construction of bundles providing counter-examples to Mercat's conjecture and relating them to Koszul cohomology, the maximal rank conjecture and the geometry of the moduli space of curves \cite{fo,ln2,ln3,fo2}. In particular many examples of bundles of rank $2$ have been constructed which contradict Mercat's conjecture, all of which involve the non-injectivity of the map $\psi$ of \eqref{eqnpsi}. All the curves involved lie on K3 surfaces and it remains possible that $\Clt=\Cl$ for the general curve of any genus. In particular, for a general curve of genus $g=11$, it is proved in \cite[Theorem 5.1]{fo2} that $\Clt=\Cl$; on the other hand, there exist curves of genus $11$ for which $\Cl$ takes its maximal value $5$ with $\Clt<\Cl$ \cite[Theorem 1.4]{fo}. This does not completely answer the question raised in Remark \ref{r11} since we do not know whether any of the latter curves can be Petri.

For a general curve of any odd genus, it is proved in \cite[Theorem 1.1]{fo} that  $B(2,g+3,4)$ is non-empty. Thus the result described in Remark \ref{rmk1} can be improved to state that, for a general curve of any genus, $B(2,g+3,4)\ne\emptyset$ if $d\ge g+3$. Note here that the condition $d\ge g+3$ is equivalent to $\beta(2,d,4)\ge0$. For $g=7$ and $g=9$, we have $d_4=g+3$, so in these cases
$$B(2,d,4)\ne\emptyset\Longleftrightarrow d\ge g+3,$$
improving the result of Proposition \ref{prop7}. In fact, \cite[Theorem 1.3]{fo} allows us to go further. Again, for a general curve of any odd genus, the locus in $B(1,g+3,5)$ consisting of those $M$ for which \eqref{eqnpsi} is not injective has a component of dimension $2$ whose general point $M$ is not of the form $L\otimes L'$ with $L, L'\in B(1,d_1,2)$ (note that $d_1=\frac{g+3}2$). By the last paragraph of Remark \ref{r5}, this gives a component of dimension $2$ of $B(2,g+3,4)$. This resolves the issues raised in Remark \ref{r10}.

In the case $g=7$, a stronger result can be proved very simply for any Petri curve, using the fact that all classical Brill-Noether loci of positive dimension are irreducible. Consider the irreducible variety $V=B(1,5,2)\times B(1,5,2)$ of dimension $2$. Note that, if $(L,L)\in V$, then $h^0(L\otimes L)=4$ (see \cite[Lemma 2.10]{ln3} for a proof). It follows by semicontinuity that $h^0(L\otimes L')\le4$ (hence $=4$ by the base-point free pencil trick) for the general point $(L,L')$ of $V$. Since $B(1,10,5)$ also has dimension $2$, it follows that the general $M\in B(1,10,5)$ is not of the form $L\otimes L'$ for $(L,L')\in B(1,5,2)$.  It follows then from Remarks \ref{r5} and \ref{r10} and the irreducibility of $B(1,10,5)$ that $B(2,10,4)$ is irreducible of dimension $2$.

Finally, we are grateful to Gavril Farkas for demonstrating to us that  the answer to the question raised at the end of section \ref{g=10} is the expected one, namely that on a general curve $C$ of genus $10$, there do not exist line bundles $M, M'\in B(1,12,5)$ such that the multiplication map (12) is not surjective. This implies, by Proposition \ref{prop6}, that there are no stable bundles on $C$ computing $\Clt$. The proof runs as follows. Let ${\mathcal M}$ denote the moduli space of curves of genus $10$ and let ${\mathcal G}^1_{10,6}$ be the Hurwitz scheme parameterising pairs $(C,L)$, where $C$ is a curve of genus $10$ and $L\in B(1,6,2)$. The fibre product $X:={\mathcal G}^1_{10,6}\times_{\mathcal M}{\mathcal G}^1_{10,6}$ then parameterises triples $(C,L,L')$ with $L,L'\in B(1,6,2)$ or equivalently, by duality, triples $(C,M,M')$ with $M,M'\in B(1,12,5)$. The scheme $X\setminus \Delta$, where $\Delta$ denotes the diagonal, parametrises morphisms $C\to{\mathbb P}^1\times{\mathbb P}^1$ of bidegree $(6,6)$ which are birational onto their image. It follows that $X\setminus\Delta$ is irreducible. Now suppose that \eqref{eqn11} fails to be surjective for some $M\ne M'$ on every curve $C$ of genus $10$. Then, since $X\setminus\Delta$ is irreducible and the generic fibre of $X\to{\mathcal M}$ is finite, \eqref{eqn11} fails to be surjective for every  pair $M\ne  M'$; hence \eqref{eqnpsi} fails to be surjective for some $M$ on any curve in the branch locus of ${\mathcal G}^1_{10,6}$. This branch locus coincides with  the Gieseker-Petri locus $GP^1_{10,6}$, which is known to be a divisor in ${\mathcal M}$, while the locus of curves which admit a line bundle $M$ with \eqref{eqnpsi} not surjective is also a divisor, which coincides with the irreducible divisor ${\mathcal K}_{10}$ of curves which lie on K3 surfaces. So we have $GP^1_{10,6}\subset {\mathcal K}_{10}$ and these two divisors must coincide set-theoretically. This is a contradiction since the slopes of these divisors are known (see \cite{eh} for $GP^1_{10,6}$ (where it is denoted by $E^1_6$) and \cite{fp} for ${\mathcal K}_{10}$) and are different (in fact ${\mathcal K}_{10}$ is the unique divisor in ${\mathcal M}$ having slope contradicting the slope conjecture of Harris and Morrison). This completes the proof and demonstrates again the close connection between higher rank Brill-Noether theory and the geometry of the moduli space of curves.

\end{document}